\newtheorem {thm}{Theorem}[section]
\newtheorem{lem}[thm]{Lemma}
\newtheorem{prop}[thm]{Proposition}
\newtheorem{cor}[thm]{Corollary}
\newtheorem{df}[thm]{Definition}
\newtheorem{ex}[thm]{Example}
\newtheorem{exs}[thm]{Examples}
\begin{document}

\title[Reflexivity of Rings via Nilpotent Elements]{Reflexivity of Rings via Nilpotent Elements}
\author{A. Harmanci}
\address{Abdullah Harmanci, Department of Mathematics, Hacettepe
University, Ankara,~ Turkey}\email{harmanci@hacettepe.edu.tr}
\author{H. Kose}
\address{Handan Kose, Department of Mathematics, Ahi Evran University, Kirsehir, Turkey}
\email{handan.kose@ahievran.edu.tr}
\author{Y. Kurtulmaz}
\address{Yosum Kurtulmaz, Department of Mathematics, Bilkent University,
Ankara, Turkey}\email{yosum@fen.bilkent.edu.tr}
\author{B. Ungor}
\address{Burcu Ungor, Department of Mathematics, Ankara
University, Ankara, Turkey}\email{bungor@science.ankara.edu.tr}

\date{}

\begin{abstract} An ideal $I$ of a ring $R$ is called {\it left N-reflexive} if for any
$a\in$ nil$(R)$, $b\in R$, being $aRb \subseteq I$ implies $bRa
\subseteq I$ where  nil$(R)$ is the set of all nilpotent elements
of $R$. The ring $R$ is called {\it left N-reflexive} if the zero
ideal is left N-reflexive. We study the properties of left
N-reflexive rings and related concepts. Since reflexive rings and
reduced rings are left N-reflexive, we investigate the sufficient
conditions for left N-reflexive rings to be reflexive and reduced.
We first consider basic extensions of left N-reflexive rings. For
an ideal-symmetric ideal $I$ of a ring $R$, $R/I$ is left
N-reflexive. If an ideal $I$ of a ring $R$ is reduced as a ring
without identity and  $R/I$ is left N-reflexive, then $R$ is left
N-reflexive. If $R$ is a quasi-Armendariz ring and the
coefficients of any nilpotent polynomial in $R[x]$ are nilpotent
in $R$, it is proved that $R$ is left N-reflexive if and only if
$R[x]$ is left N-reflexive. We show that the concept of
N-reflexivity is weaker than that of reflexivity and stronger than
that of left N-right idempotent reflexivity and right idempotent
reflexivity which are introduced in Section 5.

\vspace{2mm}

\noindent {\bf2010 MSC:}  16N40, 16S80, 16S99, 16U80

\noindent {\bf Key words:} Reflexive ring, left N-reflexive ring,
left N-right idempotent reflexive ring, quasi-Armendariz ring,
nilpotent element

\vskip 0.5cm
\end{abstract}
\maketitle
\section{Introduction}
Throughout this paper, all rings are associative with identity. A
ring is called {\it reduced} if it has no nonzero nilpotent
elements. A weaker condition than ``reduced'' is defined by Lambek
in \cite{Lam}. A ring $R$ is said to be {\it symmetric} if for any
$a$, $b, c\in R$, $abc=0$ implies $acb=0$. Equivalently, $abc=0$
implies $bac=0$. An equivalent condition on a ring to be symmetric
is that whenever a product of any number of elements is zero, any
permutation of the factors still yields product zero.

In \cite{Mi}, a right ideal $I$ of $R$ is said to be {\it
reflexive} if $aRb \subseteq I$ implies $bRa \subseteq I$ for any
$a$, $b\in R$. $R$ is called a {\it  reflexive ring} if $aRb = 0$
implies $bRa = 0$ for any $a$, $b\in R$. And in  \cite{ZZG}, $R$
is said to be a {\it weakly reflexive ring} if $aRb = 0$ implies
$bRa\subseteq \mbox{nil}(R)$ for any $a$, $b\in R$. In \cite{KUH},
a ring $R$ is said to be {\it nil-reflexive} if $aRb\subseteq$
nil$(R)$  implies that $bRa\subseteq$ nil$(R)$  for any $a$, $b\in
R$. Let $R$ be a ring. In \cite{JAKL},  $R$ is called a {\it
reflexivity with maximal ideal axis ring}, in short, an {\it RM
ring} if $aMb = 0$ for a maximal ideal $M$ and for any $a$, $b\in
R$, then $bMa = 0$, similarly, $R$ has {\it reflexivity with
maximal ideal axis on idempotents}, simply, {\it RMI}, if $eMf =
0$ for any idempotents $e$, $f$ and a maximal ideal of $M$, then
$fMe = 0$. In \cite{KL1}, $R$ has {\it
reflexive-idempotents-property}, simply, {\it RIP}, if $eRf = 0$
for any idempotents $e$, $f$, then $fRe = 0$,  A left ideal $I$ is
called {\it idempotent reflexive} \cite{Ki} if $aRe\subseteq I$
implies $eRa\subseteq I$ for $a$, $e^2 = e\in R$. A ring $R$ is
called {\it idempotent reflexive} if $0$ is an idempotent
reflexive ideal. And Kim and Baik \cite{KB}, introduced the left
and right idempotent reflexive rings. A two sided ideal  $I$ of a
ring $R$ is called {\it right idempotent reflexive} if
$aRe\subseteq I$ implies $eRa\subseteq I$ for any $a$, $e^2 = e\in
R$. A ring $R$ is called {\it right idempotent reflexive} if  $0$
is the right idempotent reflexive ideal. Left idempotent reflexive
ideals and rings are defined similarly. If a ring $R$ is left and
right idempotent reflexive then it is called an idempotent
reflexive ring.

In this paper, motivated by these classes of types of reflexive
rings, we introduce left N-reflexive rings and right N-reflexive
rings. We prove that some results of reflexive rings can be
extended to the left N-reflexive rings for this general setting.
We investigate characterizations of left N-reflexive rings, and
that many families of left N-reflexive rings are presented.

In what follows, $\Bbb{Z}$ denotes the ring of integers and for a
positive integer $n$, $\Bbb{Z}_n$ is the ring of integers modulo
$n$. We write $M_n(R)$ for the ring of all $n\times n$ matrices,
$U(R)$,  nil$(R)$ will denote the  group of units and the set of
all nilpotent elements of $R$, $U_n(R)$  is the  ring of upper
triangular matrices over $R$ for a positive integer $n\geq 2$, and
$D_n(R)$ is the ring of all matrices in $U_n(R)$ having main
diagonal entries equal.

\section{N-reflexivity of rings} In this section, we introduce a class
of rings, so-called left N-reflexive rings and right N-reflexive
rings. These classes of rings generalize reflexive rings. We
investigate which properties of reflexive rings hold for the left
N-reflexive case. We supply an example to show that there are left
N-reflexive rings that are neither right N-reflexive nor reflexive
nor reversible. It is shown that the class of left N-reflexive
rings is closed under finite direct sums. We have an example to
show that homomorphic image of a left N-reflexive ring is not left
N-reflexive. Then, we determine under what conditions a
homomorphic image of a ring is left N-reflexive.

\noindent We now give our main definition.
\begin{df}{\rm Let $R$ be a ring and $I$ an ideal of $R$. $I$ is called {\it left N-reflexive} if for any
$a\in$ nil$(R)$, $b\in R$, being $aRb \subseteq I$ implies $bRa
\subseteq I$. The ring $R$ is called {\it left N-reflexive} if the
zero ideal is left N-reflexive. Similarly, $I$ is called {\it
right N-reflexive} if for any $a\in$ nil$(R)$, $b\in R$, being
$bRa \subseteq I$ implies $aRb \subseteq I$. The ring $R$ is
called {\it right N-reflexive} if the zero ideal is right
N-reflexive. The ring $R$ is called {\it N-reflexive} if it is
left and right N-reflexive.}
\end{df}

Clearly, every reflexive ring and every semiprime ring are
N-reflexive. There are left N-reflexive rings which are not
semiprime and there are left N-reflexive rings which are neither
reduced nor reversible.

Let $F$ be a field and  $R = F[x]$ be the polynomial ring  over
$F$ with $x$ an indeterminate and $\alpha : R\rightarrow R$ be a
homomorphism defined by $\alpha(f(x)) = f(0)$ where $f(0)$ is the
constant term of $f(x)$. Let $D^{\alpha}_2(R)$ denote skewtrivial
extension of $R$ by $R$ and $\alpha$. So $D^{\alpha}_2(R) =
\left\{\left(\begin{array}{cc}f(x)&g(x)\\0&f(x)\end{array}\right)\mid
f(x), g(x)\in R\right\}$ is the ring with componentwise addition
of matrices and multiplication: \begin{center}
$\left(\begin{array}{cc}f(x)&g(x)\\0&f(x)\end{array}\right)\left(\begin{array}{cc}h(x)&t(x)\\0&h(x)\end{array}\right)
= \left(\begin{array}{cc}f(x)h(x)&\alpha(f(x))t(x) +
g(x)h(x)\\0&f(x)h(x)\end{array}\right)$.\end{center} There are
left N-reflexive rings which are neither reflexive nor semiprime. The N-reflexive property of rings is not left-right symmetric.
\begin{ex}{\rm 
Let $D^{\alpha}_2(R)$ denote skewtrivial extension of $R$ by $R$
and $\alpha$ as mentioned above. Then by \cite[Example 3.5]{ZZG},
$D^{\alpha}_2(R)$ is not reflexive. We show that $D^{\alpha}_2(R)$
is left N-reflexive. Note that the set of all nilpotent elements
of $D^{\alpha}_2(R)$ is the set
$\left\{\left(\begin{array}{cc}0&f(x)\\0&0\end{array}\right)\mid
f(x)\in R\right\}$. Let $A =
\left(\begin{array}{cc}0&f(x)\\0&0\end{array}\right)$ be a
nilpotent in $D^{\alpha}_2(R)$ and  $B =
\left(\begin{array}{cc}h(x)&g(x)\\0&h(x)\end{array}\right)$ any
element in $D^{\alpha}_2(R)$. Assume that $AD^{\alpha}_2(R)B = 0$.
We may assume $f(x)\neq 0$. Then an easy calculation,
$AD^{\alpha}_2(R)B = 0$ reveals that $h(x) = 0$, and also
$BD^{\alpha}_2(R)A = 0$. Hence $D^{\alpha}_2(R) $ is left
N-reflexive. Next we show that $D^{\alpha}_2(R)$ is not right
N-reflexive. Let $A =
\left(\begin{array}{cc}0&f(x)\\0&0\end{array}\right)$ be a
nilpotent and $B =
\left(\begin{array}{cc}xh(x)&g(x)\\0&xh(x)\end{array}\right)$ any
element in $D^{\alpha}_2(R)$ with both $f(x)$ and $h(x)$ nonzero.
By definitions $BD^{\alpha}_2(R)A = 0$. Since $xh(x)f(x)$ is
nonzero, $AD^{\alpha}_2(R)B =
\left(\begin{array}{cc}0&f(x)r(x)xh(x)\\0&0\end{array}\right)$ is
nonzero for some nonzero $r(x)\in R$. So $D^{\alpha}_2(R)$ is not
right N-reflexive.

On the other hand, nil$(D^{\alpha}_2(R))$
is an ideal of $D^{\alpha}_2(R)$ and
$(\mbox{nil}(D^{\alpha}_2(R)))^2=0$ but nil$(D^{\alpha}_2(R))\neq
0$. Therefore $D^{\alpha}_2(R)$ is not semiprime.}
\end{ex}
\begin{prop}\label{ilk} Let $R$ be a left N-reflexive ring. Then for any
idempotent $e$ of $R$, $eRe$ is also left N-reflexive.
\end{prop}
\begin{proof} Let $eae\in eRe$ be a nilpotent and $ebe\in eRe$
arbitrary element with $eaeRebe = 0$. Then we have $ebeReae = 0$ since $R$ is left N-reflexive.
\end{proof}


\begin{exs}\label{ex}{\rm  (1) Let $F$ be a field and $R = M_2(F)$.
 In fact, $R$ is a simple ring, therefore prime. Let $A$, $B\in R$ with $ARB = 0$. Since $R$ is prime, $A = 0$ or
  $B = 0$. Hence $BRA = 0$. So
 $R$ is reflexive. Therefore $R$ is N-reflexive.\\
(2) Let $F$ be a field and consider the ring $R = D_3(F)$. Then
$R$ is neither left N-reflexive nor right N-reflexive. Let $A =
\left(\begin{array}{ccc}0&0&1\\0&0&1\\0&0&0\end{array}\right)$ be
nilpotent in $D_3(F)$ and $B =
\left(\begin{array}{ccc}0&1&1\\0&0&1\\0&0&0\end{array}\right)\in
D_3(F)$. Then $ARB = 0$. For $C =
\left(\begin{array}{ccc}1&1&1\\0&1&1\\0&0&1\end{array}\right)$,
$BCA =
\left(\begin{array}{ccc}0&0&1\\0&0&0\\0&0&0\end{array}\right)\neq
0$. Hence $R$ is not left N-reflexive.\\ Next we show that $R$ is
not right N-reflexive either. Now assume that $A =
\left(\begin{array}{ccc}0&1&1\\0&0&0\\0&0&0\end{array}\right)\in
\mbox{nil}(R)$ and $B =
\left(\begin{array}{ccc}0&1&1\\0&0&1\\0&0&0\end{array}\right)\in
R$. It is clear that $BRA = 0$.  For $C =
\left(\begin{array}{ccc}1&1&1\\0&1&1\\0&0&1\end{array}\right)\in
R$, we have $ACB\neq 0$. Hence $R$ is not right N-reflexive.}
\end{exs}
\begin{lem}\label{iso} N-reflexivity  of rings is preserved under isomorphisms.
\end{lem}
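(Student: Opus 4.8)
The plan is to transport the defining implication of left (and, symmetrically, right) N-reflexivity across a ring isomorphism $\varphi\colon R\to S$, exploiting that $\varphi$ is a bijective ring homomorphism. The one structural fact I would isolate first is that $\varphi$ carries nil$(R)$ bijectively onto nil$(S)$: if $a^{n}=0$ then $\varphi(a)^{n}=\varphi(a^{n})=0$, and since $\varphi^{-1}$ is also a homomorphism the converse holds too. The second fact I would use repeatedly is that $\varphi(R)=S$, so that for any $a,b\in R$ one has $\varphi(aRb)=\varphi(a)\,\varphi(R)\,\varphi(b)=\varphi(a)\,S\,\varphi(b)$; combined with injectivity of $\varphi$, this makes $aRb=0$ equivalent to $\varphi(a)\,S\,\varphi(b)=0$.

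Assuming $R$ is left N-reflexive, I would verify the same for $S$ as follows. Take $s\in$ nil$(S)$ and $t\in S$ with $sSt=0$. Writing $a=\varphi^{-1}(s)$ and $b=\varphi^{-1}(t)$, the first fact gives $a\in$ nil$(R)$, and the second fact converts $sSt=\varphi(aRb)=0$ into $aRb=0$. Left N-reflexivity of $R$ then yields $bRa=0$, and applying $\varphi$ gives $tSs=\varphi(bRa)=0$. Hence $S$ is left N-reflexive. The argument for right N-reflexivity is word for word the same with the two containments interchanged, so that N-reflexivity (left together with right) is preserved; and of course $R\cong S$ gives $S\cong R$, so the property transfers in both directions.

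No step presents a genuine obstacle here: the lemma is essentially the observation that the definition is phrased entirely in terms of the ring operations, the zero element, and the nilpotent set, all of which are respected by an isomorphism. The only point deserving a moment's care is the identity $\varphi(aRb)=\varphi(a)\,S\,\varphi(b)$, which uses surjectivity $\varphi(R)=S$ rather than merely that $\varphi$ is a homomorphism; a bare monomorphism would give only $\varphi(aRb)\subseteq\varphi(a)\,S\,\varphi(b)$ and would not close the argument.
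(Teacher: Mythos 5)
Your proof is correct, and it is exactly the routine transport argument that the paper itself omits: the authors state Lemma \ref{iso} without proof, treating it as evident that the defining condition involves only the ring operations, zero, and the set of nilpotents, all preserved by an isomorphism. Your care with the point that $\varphi(aRb)=\varphi(a)S\varphi(b)$ requires surjectivity is a nice touch, but there is nothing here that diverges from what the paper implicitly relies on.
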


\begin{thm} Let $R$ be a ring. Assume that $M_n(R)$ is left N-reflexive. Then $R$ is left N-reflexive.
\end{thm}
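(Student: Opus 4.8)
The plan is to transfer the N-reflexivity condition from $M_n(R)$ down to $R$ by encoding elements of $R$ as matrices supported in the $(1,1)$-corner. Write $e_{11}\in M_n(R)$ for the matrix unit having $1$ in the $(1,1)$ entry and zeros elsewhere. Given an arbitrary nilpotent $a\in\mathrm{nil}(R)$ and an arbitrary $b\in R$ with $aRb=0$, I would form the matrices $A=ae_{11}$ and $B=be_{11}$ in $M_n(R)$ and aim to show $bRa=0$.

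The first step is to observe that $A$ is nilpotent in $M_n(R)$. Since $(ae_{11})^{k}=a^{k}e_{11}$ for every $k\ge 1$, if $a^{m}=0$ then $A^{m}=0$, so $A\in\mathrm{nil}(M_n(R))$; this is exactly what is needed to apply the left N-reflexivity hypothesis with $A$ as the nilpotent factor. The key computation is the corner identity: for any $M=(m_{ij})\in M_n(R)$ one has $e_{11}Me_{11}=m_{11}e_{11}$, and hence $AMB=ae_{11}\,M\,be_{11}=a\,m_{11}\,b\,e_{11}$. As $M$ ranges over all of $M_n(R)$ the entry $m_{11}$ ranges over all of $R$, so $AM_n(R)B=\{\,arb\,e_{11}:r\in R\,\}$, which is the zero set precisely when $aRb=0$. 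By the same computation, $BM_n(R)A=\{\,bra\,e_{11}:r\in R\,\}$ vanishes if and only if $bRa=0$.

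With these two equivalences in hand the argument closes in one move: from $aRb=0$ we get $AM_n(R)B=0$ with $A$ nilpotent, so left N-reflexivity of $M_n(R)$ yields $BM_n(R)A=0$, and by the corner computation this forces $bRa=0$. Therefore $R$ is left N-reflexive.

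I expect no genuine obstacle here, since the single hypothesis on $M_n(R)$ does all the work. The only points requiring care are verifying the corner identity $e_{11}Me_{11}=m_{11}e_{11}$ and confirming that the maps $r\mapsto ae_{11}\cdot(re_{11})\cdot be_{11}$ faithfully transport both the hypothesis $aRb\subseteq 0$ and the desired conclusion $bRa\subseteq 0$; these are routine but must be checked so that the lone application of the $M_n(R)$ reflexivity is legitimate.
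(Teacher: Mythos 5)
Your proof is correct and takes essentially the same approach as the paper: both identify $R$ with the $(1,1)$-corner of $M_n(R)$ via the matrix unit $e_{11}$. The paper simply cites its general corner-ring result (that $eRe$ inherits left N-reflexivity) together with isomorphism invariance, whereas you unpack that argument explicitly with the identity $e_{11}Me_{11}=m_{11}e_{11}$; the content is identical.
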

\begin{proof}
Suppose that $M_n(R)$ is a left N-reflexive ring. Let $e_{ij}$
denote the matrix unit which $(i,j)$-entry is $1$ and the other
entries are $0$. Then $R\cong Re_{11}=e_{11}M_{n}(R)e_{11}$ is
N-reflexive by Proposition \ref{ilk} and Lemma \ref{iso}.
\end{proof}

\begin{prop}\label{reversible} Every reversible ring is left and right N-reflexive.
\end{prop}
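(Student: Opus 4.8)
The plan is to prove the stronger statement that a reversible ring is in fact \emph{reflexive}, and then to invoke the remark already recorded after the main definition, namely that every reflexive ring is N-reflexive. Recall that $R$ is reversible when $xy=0$ forces $yx=0$ for all $x,y\in R$. The key auxiliary fact I would establish first is that a reversible ring is semicommutative, i.e. $xy=0$ implies $xry=0$ for every $r\in R$: from $xy=0$ reversibility gives $yx=0$, hence $y(xr)=(yx)r=0$, and applying reversibility once more yields $(xr)y=xry=0$. This passage from ``a product is zero'' to ``an $R$-sandwiched product is zero'' is the real content, and it is the step I expect to carry the whole argument.

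With semicommutativity in hand, the left N-reflexive property is immediate. Given $a\in\mathrm{nil}(R)$, $b\in R$ with $aRb=0$, I would take the identity as the middle factor to get $ab=0$; reversibility then gives $ba=0$, and semicommutativity upgrades this to $bra=0$ for every $r\in R$, that is, $bRa=0$. The right N-reflexive property is entirely symmetric: from $bRa=0$ one reads off $ba=0$, hence $ab=0$ by reversibility, hence $aRb=0$ by semicommutativity. I would remark that the nilpotence of $a$ is never actually used, so the same computation in fact proves full reflexivity; restricting to $a\in\mathrm{nil}(R)$ only weakens the hypothesis and the conclusion still holds.

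I do not anticipate a genuine obstacle here. The one point that must be handled with care is precisely the gap between bare reversibility (which manipulates single two-factor products) and the set-theoretic conclusions $aRb=0$ and $bRa=0$ (which involve an arbitrary middle factor); this is exactly what semicommutativity supplies, and it is why I would isolate that fact at the outset rather than attempt to argue directly from the definition of reversible. Once semicommutativity is available, the remainder is a pair of two-line computations, and the proof closes by citing that reflexive rings are N-reflexive.
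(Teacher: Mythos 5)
Your proposal is correct. The paper itself offers no argument here (its proof reads, in full, ``Clear by the definitions''), so your writeup supplies exactly the details the authors leave implicit, and every step checks out: the lemma that reversibility implies semicommutativity is proved correctly, and both N-reflexivity directions follow as you say. Two small observations. First, the detour through semicommutativity, while standard, is slightly more than is needed: from $aRb=0$ you get $a(br)=0$ for every $r\in R$, and a \emph{single} application of reversibility to the pair $(a,br)$ gives $(br)a=0$, i.e.\ $bRa=0$; this one-step version is precisely how the paper argues in its later theorem that N-reversible rings are N-reflexive, whereas your route uses reversibility twice (once to prove semicommutativity, once to get $ba=0$). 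Second, your remark that nilpotence of $a$ is never used is accurate and worth making: the argument really shows that reversible rings are reflexive, which is the stronger known fact, and N-reflexivity then follows since the paper has already observed that reflexive rings are N-reflexive. Either packaging is fine; yours buys the reusable intermediate fact (semicommutativity), the direct one buys brevity.
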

\begin{proof} Clear by the definitions.
\end{proof}

The converse statement of Proposition \ref{reversible} may not be
true in general as shown below.
\begin{ex}{\rm By Examples \ref{ex}(1), $M_2(F)$ is both left and right N-reflexive. But it
is not reversible. }\end{ex}

\begin{thm} Let $R$ be a ring. Then the following are equivalent.
\begin{enumerate}
    \item $R$ is left N-reflexive.
    \item $IRJ=0$ implies $JRI=0$ for any ideal $I$ generated by a nilpotent element and
any nonempty subset $J$ of $R$.
    \item $IJ=0$ implies $JI=0$ for any ideal $I$ generated by a nilpotent element and any
    ideal $J$ of $R$.
\end{enumerate}
\end{thm}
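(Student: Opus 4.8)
The plan is to prove the cycle of implications $(1)\Rightarrow(2)\Rightarrow(3)\Rightarrow(1)$. Throughout I would use that the ideal generated by a nilpotent element $a$ is the two-sided ideal $RaR=\{\sum_i r_i a s_i : r_i,s_i\in R\}$, together with two elementary tools that follow from $1\in R$: first, $RR=R$, so products of such generated ideals collapse, e.g.\ $RaR\cdot RbR=R(aRb)R$; and second, a vanishing triple set-product such as $R(bRa)R=0$ forces the element-level identity $bRa=0$, obtained by specializing the outer factors to $1$.

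For $(1)\Rightarrow(2)$ I would assume $R$ is left N-reflexive, take $I=RaR$ with $a$ nilpotent, and take a nonempty subset $J$ with $IRJ=0$. Since $IRJ=RaRJ$, specializing the leftmost factor to $1$ gives $aRb=0$ for every $b\in J$, whence left N-reflexivity yields $bRa=0$ for each such $b$. Then, writing $JRI=JRaR$, a typical element $br(sat)=\big(b(rs)a\big)t$ vanishes, so $JRI=0$.

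For $(2)\Rightarrow(3)$ I would let $J$ be an ideal with $IJ=0$; since $RJ\subseteq J$ we get $IRJ\subseteq IJ=0$, so (2) gives $JRI=0$, and specializing the middle factor to $1$ yields $JI\subseteq JRI=0$. For $(3)\Rightarrow(1)$ I would take $a$ nilpotent and $b\in R$ with $aRb=0$ and set $I=RaR$, $J=RbR$; by the first tool $IJ=R(aRb)R=0$, so (3) gives $JI=R(bRa)R=0$, and the second tool forces $bRa=0$, proving $R$ is left N-reflexive.

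I do not anticipate a genuine obstacle: the argument is bookkeeping with set-products of ideals, repeatedly trading set-level vanishing for element-level vanishing via the identity of $R$. The one subtlety to respect is the asymmetry of the hypotheses --- $J$ is only a subset in (2) but an ideal in (3) --- which is precisely what makes $(2)\Rightarrow(3)$ work, since the ideal property $RJ\subseteq J$ is what upgrades $IJ=0$ to $IRJ=0$; note also that the nilpotence of $a$ enters only through the single invocation of left N-reflexivity in $(1)\Rightarrow(2)$.
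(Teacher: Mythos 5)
Your proposal is correct and follows essentially the same route as the paper: the same cycle $(1)\Rightarrow(2)\Rightarrow(3)\Rightarrow(1)$, with $I=RaR$, $J=RbR$ in the last step, the identity-specialization trick to pass between set-level and element-level vanishing, and the ideal property $RJ\subseteq J$ (the paper writes $J=RJ$) to upgrade $IJ=0$ to $IRJ=0$. No gaps; your write-up merely makes explicit some bookkeeping the paper leaves implicit.
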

\begin{proof} (1) $\Rightarrow$ (2) Assume that $R$ is left N-reflexive. Let $I=RaR$ with $a\in R$
nilpotent and $\emptyset\neq J\subseteq R$ such that $IRJ=0$. Then
for any $b\in J$, $aRb=0$. This implies that $bRa=0$, hence
$bR(RaR)=bRI=0$ for any $b\in J$. Thus $JRI=0$.\\
(2) $\Rightarrow$ (3) Let $I=RaR$ with $a\in R$ nilpotent and $J$
be an ideal of $R$ such that $IJ=0$. Then $J=RJ$, so $IRJ=0$. By
(2), $JRI=0$, thus $JI=0$.\\
(3) $\Rightarrow$ (1) Let $a\in R$ be nilpotent and $b\in R$ with
$aRb=0$. Then $(RaR)(RbR)=0$. By (3), $(RbR)(RaR)=0$. Hence
$bRa=0$. Therefore $R$ is left N-reflexive.
\end{proof}

For any element $a\in R$, $r_{R}(a)=\{b\in R\mid ab=0\}$ is called
the {\it right annihilator of $a$ in $R$}. The {\it left
annihilator of $a$ in $R$}  is defined similarly and denoted by
$l_{R}(a)$.
\begin{prop}
Let $R$ be a ring. Then $R$ is N-reflexive if and only if for any
nilpotent element $a$ of $R$, $r_{R}(aR)=l_{R}(Ra)$.
\end{prop}
\begin{proof}
For the necessity, let $x\in r_{R}(aR)$ for any nilpotent element
$a\in R$. We have $(aR)x=0$. The ring $R$ being N-reflexive
implies
$xRa=0$. So $x\in l_{R}(Ra)$. It can be similarly showed that $l_{R}(Ra)\subseteq r_{R}(aR)$.\\
\noindent For the sufficiency, let $a\in$ nil$(R)$ and $b\in R$
with $aRb=0$. Then $b\in r_{R}(aR)$. By hypothesis, $b\in
l_{R}(Ra)$, and so $bRa=0$. Thus $R$ is N-reflexive.
\end{proof}

For a field $F$, $D_3(F)$ is neither left N-reflexive nor right
N-reflexive. Subrings of left N-reflexive rings or right
N-reflexive rings need not be left N-reflexive  or right
N-reflexive, respectively. But there are some subrings of $D_3(F)$
that are left N-reflexive or right N-reflexive as shown below.

\begin{prop}\label{saturday} Let $R$ be a reduced ring (i.e., it has no nonzero nilpotent elements). Then the following hold.
\begin{enumerate}
\item Let $S =
\left\{\left(\begin{array}{ccc}a&b&c\\0&a&0\\0&0&a\end{array}\right)\mid
a, b, c\in R\right\}$ be  a subring of $D_3(R)$. Then $S$ is
N-reflexive.
\item Let $S =
\left\{\left(\begin{array}{ccc}a&0&c\\0&a&b\\0&0&a\end{array}\right)\mid
a, b, c\in R\right\}$ be  a subring of $D_3(R)$. Then $S$ is
N-reflexive.
\end{enumerate}
\end{prop}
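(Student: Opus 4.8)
The plan is to handle both parts by a single mechanism, since the two subrings share the shape ``scalar diagonal with two free off-diagonal slots.'' First I would determine $\mathrm{nil}(S)$. The map $\phi\colon S\to R$ sending a matrix to its common diagonal entry $a$ is a ring homomorphism, because each diagonal entry of a product of such upper triangular matrices is the product of the corresponding diagonal entries. Hence if $A\in S$ is nilpotent then $\phi(A)=a$ is nilpotent in $R$, and since $R$ is reduced this forces $a=0$; conversely any matrix in $S$ with $a=0$ squares to $0$. So in part (1) the nilpotents are exactly $\left(\begin{smallmatrix}0&b_1&c_1\\0&0&0\\0&0&0\end{smallmatrix}\right)$ and in part (2) exactly $\left(\begin{smallmatrix}0&0&c_1\\0&0&b_1\\0&0&0\end{smallmatrix}\right)$, with $b_1,c_1\in R$ arbitrary.

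For part (1) I would fix such a nilpotent $A$ with entries $b_1,c_1$, an arbitrary $B\in S$ with diagonal entry $a$, and a general $X\in S$ with diagonal entry $x$, and compute the two triple products. A direct calculation gives $AXB=\left(\begin{smallmatrix}0&b_1xa&c_1xa\\0&0&0\\0&0&0\end{smallmatrix}\right)$ and $BXA=\left(\begin{smallmatrix}0&axb_1&axc_1\\0&0&0\\0&0&0\end{smallmatrix}\right)$. Thus $ASB=0$ is equivalent to $b_1Ra=0$ and $c_1Ra=0$, while $BSA=0$ is equivalent to $aRb_1=0$ and $aRc_1=0$. The bridge between the two is that a reduced ring is reflexive: if $uRv=0$ then $(vru)^2=vr(uv)ru=0$ for every $r$ (using $uv\in uRv=0$), so $vru$ is nilpotent, hence $vru=0$ and $vRu=0$. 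Applying this to the pairs $(b_1,a)$ and $(c_1,a)$ shows the two systems of conditions are equivalent, which yields both left and right N-reflexivity of $S$ simultaneously.

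Finally, part (2) is handled by the identical scheme: with $A$ nilpotent (entries $c_1,b_1$), $B$ of diagonal $a$, and $X$ of diagonal $x$, the products come out as $AXB=\left(\begin{smallmatrix}0&0&c_1xa\\0&0&b_1xa\\0&0&0\end{smallmatrix}\right)$ and $BXA=\left(\begin{smallmatrix}0&0&axc_1\\0&0&axb_1\\0&0&0\end{smallmatrix}\right)$, so $ASB=0$ and $BSA=0$ again translate into $\{c_1Ra=0,\ b_1Ra=0\}$ and $\{aRc_1=0,\ aRb_1=0\}$ respectively, and reflexivity of the reduced ring $R$ closes the loop. I expect the only genuine work to be the bookkeeping in these matrix multiplications and matching each entry of $ASB=0$ and $BSA=0$ to the correct reflexivity statement in $R$; conceptually everything collapses to the reflexivity of $R$, with reducedness entering twice, once to compute $\mathrm{nil}(S)$ and once to supply that reflexivity.
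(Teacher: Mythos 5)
Your proof is correct, and for part (1) it follows essentially the same route as the paper: the paper likewise reduces the hypothesis to relations $bu=cu=0$ in $R$ and then kills the entries $uxb$, $uxc$ of $BCA$ by observing that their squares vanish in the reduced ring $R$ --- exactly your ``reduced implies reflexive'' mechanism, just inlined as a nilpotency computation rather than isolated as a lemma. The genuine differences are these. First, you justify the description of $\mathrm{nil}(S)$ via the diagonal-projection homomorphism, which the paper takes for granted. Second, by recording that $ASB=0$ and $BSA=0$ are \emph{equivalent} systems of conditions in $R$ (namely $\{b_1Ra=c_1Ra=0\}$ versus $\{aRb_1=aRc_1=0\}$), you obtain left and right N-reflexivity in one stroke, where the paper proves only the left case and appeals to ``a similar proof'' for the right. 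Third, and most substantively, for part (2) the paper does no computation at all: it simply cites Proposition 2.2 of \cite{ZZG}, whereas you rerun the same matrix calculation, making your argument self-contained and uniform across both parts. One small sharpening the paper exploits that you could adopt: in part (1) the single equation $AB=0$ (the instance $X=1$) already yields $b_1a=c_1a=0$, and that alone forces every entry of $BXA$ to have square zero; so the full strength of $ASB=0$ is never needed, only its evaluation at the identity.
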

\begin{proof} (1) Let $A = \left(\begin{array}{ccc}0&b&c\\0&0&0\\0&0&0\end{array}\right)\in S$ be
any nonzero nilpotent element and $B =
\left(\begin{array}{ccc}u&v&t\\0&u&0\\0&0&u\end{array}\right)\in
S$. Assume that $ASB = 0$. This implies $AB=0$, and so $bu=0$ and
$cu=0$. For any
$C=\left(\begin{array}{ccc}x&y&z\\0&x&0\\0&0&x\end{array}\right)\in
S$,
$BCA=\left(\begin{array}{ccc}0&uxb&uxc\\0&0&0\\0&0&0\end{array}\right)$.
Being $bu=cu=0$ implies $(uxb)^2=(uxc)^2=0$. Since $R$ is reduced,
$uxb=uxc=0$. Then $BCA=0$. Hence $BSA = 0$. Thus $R$ is left
N-reflexive. A similar proof
implicates that $S$ is right N-reflexive.\\
(2) By \cite[Proposition 2.2]{ZZG}.
\end{proof}

The condition $R$ being reduced in Proposition \ref{saturday} is
not superfluous as the following example shows.

\begin{ex}\label{örn}{\rm Let $F$ be a field and $R = F<a, b>$ be the free algebra with noncommuting indeterminates $a$,
$b$ over $F$. Let $I$ be the ideal of $R$ generated by $aRb$ and
$a^2$. Consider the ring $\overline {R} = R/I$. Let $\overline a$,
$\overline b\in \overline R$. Then $\overline a\overline
R\overline b = 0$. But $\overline b\overline R\overline a\neq 0$
since $ba\notin I$. Note that $\overline R$ is not reduced.
Consider the ring $S=\left\{\left(\begin{array}{ccc} \overline
x&\overline
y&\overline z\\ \overline 0&\overline x&\overline 0\\
\overline0&\overline 0&\overline x\end{array}\right)\mid \overline
x, \overline y, \overline z\in \overline R\right\}$. Let
$A=\left(\begin{array}{ccc} \overline a&\overline
1&\overline 1\\ \overline 0&\overline a&\overline 0\\
\overline0&\overline 0&\overline a\end{array}\right) \in nil(S) $ and
$B=\left(\begin{array}{ccc} \overline 0&\overline
b&\overline b\\ \overline 0&\overline 0&\overline 0\\
\overline0&\overline 0&\overline 0\end{array}\right) \in S$. Then
$ASB=0$ since $\overline a \overline R \overline b=0$. However
$BA\neq 0$ since $\overline b \overline a\neq 0$. Hence $S$ is not
left N-reflexive.}
\end{ex}

\begin{df}  Let $R$ be a ring. We call that $R$ is  {\it left  N-reversible} if for any nilpotent
$a\in R$ and $b\in R$, $ab = 0$  implies $ba = 0$.
\end{df}

In \cite{MMZ},  a ring $R$ is called {\it nil-semicommutative} if
for every nilpotent $a$, $b\in R$, $ab = 0$ implies $aRb = 0.$

\begin{thm} If a ring $R$ is N-reversible, then $R$ is nil-semicommutative  and N-reflexive.
\end{thm}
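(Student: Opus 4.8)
The plan is to read ``N-reversible'' as the two-sided condition, namely that $R$ is both left N-reversible (for nilpotent $a$, $ab=0\Rightarrow ba=0$) and right N-reversible (for nilpotent $a$, $ba=0\Rightarrow ab=0$); this matches the left/right bookkeeping already used for N-reflexivity, and both halves turn out to be genuinely needed. I would first isolate the following computational engine: whenever $a$ is nilpotent and occurs as a factor of a zero product, reversibility lets me reverse the product, and after multiplying by an arbitrary $r$ and reversing once more I can slide an arbitrary element into the relation. Concretely, for nilpotent $a$ the single relation $ab=0$ already forces $arb=0$ for every $r$: indeed $ab=0$ gives $ba=0$ by left N-reversibility, hence $r(ba)=(rb)a=0$, and then right N-reversibility applied to the nilpotent $a$ sitting on the right returns $a(rb)=arb=0$. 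I would record this as the workhorse, noting that it used only the nilpotency of $a$.

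With this in hand nil-semicommutativity is immediate: if $a,b\in\mathrm{nil}(R)$ satisfy $ab=0$, the displayed computation (which needs nilpotency of $a$ alone) yields $aRb=0$ directly, so $R$ is nil-semicommutative.

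For N-reflexivity the delicate point is the reflexive reversal itself, turning $aRb=0$ into $bRa=0$. The naive move---multiply $arb=0$ on the left by $s$ to get $(sa)(rb)=0$ and reverse---fails here, because neither $sa$ nor $rb$ need be nilpotent, so N-reversibility does not apply to that product; this is precisely where the classical ``reversible $\Rightarrow$ reflexive'' argument used full reversibility, and where the nilpotent hypothesis must instead be exploited. The fix I would use is to regroup so that the nilpotent $a$ is itself a factor of a zero product: from $aRb=0$ one has $ab=0$, whence $a(br)=(ab)r=0$ for every $r$, and now left N-reversibility (with the genuine nilpotent $a$ on the left) gives $(br)a=bra=0$, i.e.\ $bRa=0$, so $R$ is left N-reflexive. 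The right N-reflexive statement is the mirror image: from $bRa=0$ one gets $ba=0$, so $(rb)a=r(ba)=0$, and right N-reversibility with $a$ nilpotent on the right yields $a(rb)=arb=0$, i.e.\ $aRb=0$. Combining the two gives that $R$ is N-reflexive. The only real obstacle is thus locating the correct grouping that exposes $a$'s nilpotency; once that is seen, each part reduces to a one- or two-line reversal argument.
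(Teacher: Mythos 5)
Your proof is correct and follows essentially the same route as the paper's: your left N-reflexive step (regroup $a(br)=(ab)r=0$ and apply left N-reversibility to get $bra=0$) is literally the paper's argument, and your right-sided mirror, using right N-reversibility on $(rb)a=0$, is exactly the ``similar discussion'' the paper leaves implicit, confirming that the two-sided reading of N-reversible is what the theorem needs. The only cosmetic difference is in nil-semicommutativity, where you route through your workhorse lemma (left-then-right N-reversibility, needing only $a$ nilpotent) while the paper applies left N-reversibility twice (first with $a$ nilpotent, then with $b$ nilpotent on $b(ar)=0$); both are valid one-line reversal arguments.
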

\begin{proof} Assume that $R$ is N-reversible. Let $a\in R$ be nilpotent and $b\in R$
with $aRb = 0$. Then $ab=0$. For any $r\in R$, being $abr=0$
implies $bra=0$, and so $bRa=0$. Hence $R$ is left N-reflexive. By
a similar discussion, $R$ is right N-reflexive. So $R$ is
N-reflexive. In order to see that $R$ is nil-semicommutative, let
$a, b\in R$ be nilpotent with $ab=0$. N-reversibility of $R$
implies $ba=0$, and so $bar=0$ for any $r\in R$. Again by the
N-reversibility of $R$, we have $arb=0$. Thus $aRb=0$.
\end{proof}

Note that in a subsequent paper, N-reversible rings will be
studied in detail by the present authors.

Let $R$ be a ring and $I$ an ideal of $R$. Recall by \cite{CKL},
$I$ is called {\it ideal-symmetric} if $ABC\subseteq I$ implies
$ACB\subseteq I$ for any ideals $A, B, C$ of $R$. In this vein, we
mention the following result.
\begin{prop} Let $R$ be a  ring and $I$ an ideal-symmetric ideal of $R$.
Then $R/I$ is an N-reflexive ring.
\end{prop}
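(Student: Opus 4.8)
The plan is to prove the stronger statement that $R/I$ is in fact \emph{reflexive}, from which N-reflexivity (both left and right) follows at once by the remark after the main definition that every reflexive ring is N-reflexive. Unwinding the quotient, reflexivity of $R/I$ amounts to the implication $aRb\subseteq I\Rightarrow bRa\subseteq I$ for all $a,b\in R$, since $\overline a\,(R/I)\,\overline b=\overline 0$ is exactly the condition $aRb\subseteq I$. Observe that the nilpotency of $\overline a$ will play no role, so establishing this implication for arbitrary $a,b$ simultaneously settles the left N-reflexive condition and, with the roles of $a$ and $b$ reversed, the right N-reflexive condition as well.

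The key step is to replace the elements $a,b$ by the two-sided ideals they generate and to feed the correct triple into the ideal-symmetry hypothesis. I would set $A=RaR$ and $B=RbR$ and consider the three ideals $R$, $A$, $B$. First I would check that $RAB=RaRbR=R(aRb)R\subseteq RIR\subseteq I$, using only that $aRb\subseteq I$ and that $I$ is two-sided. Applying the ideal-symmetric hypothesis to the triple $(R,A,B)$ — whose definition swaps the last two factors — then yields $RBA\subseteq I$, that is $RbRaR\subseteq I$. Since $bRa\subseteq RbRaR$ (take the outer $R$-factors to be $1$), this gives $bRa\subseteq I$, as required.

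I expect the only genuine subtlety to lie in the choice of triple. The naive attempt of applying ideal-symmetry to $(A,R,B)$ merely transposes $R$ and $B$ and returns $ABR=RaRbR$, which does \emph{not} reverse the roles of $a$ and $b$; the built-in transposition acts on the second and third slots only, so one must place $R$ in the first slot and the two generated ideals $A,B$ in the last two slots in order for the swap to interchange $a$ and $b$. Once this is arranged, the remaining computation is routine: products of principal two-sided ideals collapse via $RR=R$, and the two-sidedness of $I$ absorbs the stray factors of $R$. Passing back to $R/I$, the implication $aRb\subseteq I\Rightarrow bRa\subseteq I$ reads $\overline a\,(R/I)\,\overline b=\overline 0\Rightarrow\overline b\,(R/I)\,\overline a=\overline 0$, so $R/I$ is reflexive and therefore N-reflexive.
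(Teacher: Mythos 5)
Your proof is correct and follows essentially the same route as the paper's: both apply the ideal-symmetry hypothesis to the triple $(R,\,RaR,\,RbR)$, using $R(RaR)(RbR)=R(aRb)R\subseteq I$ and the swap of the last two slots to obtain $bRa\subseteq R(RbR)(RaR)\subseteq I$. Your additional observation that the nilpotency of $\overline a$ is never used — so $R/I$ is in fact reflexive, not merely N-reflexive — is accurate and mildly sharpens the statement, though the paper's own argument implicitly shows the same thing.
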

\begin{proof} Let $\overline a \in R/I$ be nilpotent and $\overline b\in
R/I$ with $\overline a (R/I) \overline b=0$. Then $aRb\subseteq
I$. So $R(RaR)(RbR)\subseteq I$. By hypothesis,
$R(RbR)(RaR)\subseteq I$. Therefore $bRa\subseteq I$, and so
$\overline b(R/I)\overline a=0$. It means that $R/I$ is left
N-reflexive. Similarly, it can be shown that $R/I$ is also right
N-reflexive.
\end{proof}

Let $R$ be a ring and $I$ an ideal of $R$. In the short exact
sequence $0\rightarrow I\rightarrow R\rightarrow R/I\rightarrow
0$,  $I$ being N-reflexive (as a ring without identity) and $R/I$
being N-reflexive need not imply that $R$ is N-reflexive.

\begin{ex}{\rm Let $F$ be a field and consider the ring $R = D_3(F)$. Let
$I = \begin{pmatrix}0&F&F\\0&0&F\\0&0&0\end{pmatrix}$. Then $I$ is
 N-reflexive since  $I^3 = 0$. Also, $R/I$ is N-reflexive since
$R/I$ is isomorphic to $F$. However, by Examples \ref{ex} (2), $R$
is not N-reflexive. }\end{ex}

\begin{thm}\label{böl} Let $R$ be a ring and $I$ an ideal of $R$. If $I$ is
reduced as a ring (without identity) and $R/I$ is left
N-reflexive, then $R$ is left N-reflexive.
\end{thm}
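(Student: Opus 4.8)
The plan is to start from the defining condition and reduce modulo $I$ before exploiting the reducedness of $I$. So I would take $a\in\mbox{nil}(R)$ and $b\in R$ with $aRb=0$, and aim to produce $bRa=0$. Passing to $\overline R=R/I$, the image $\overline a$ is still nilpotent (any power of $a$ that vanishes forces the same power of $\overline a$ to vanish), and $\overline a\,\overline R\,\overline b=\overline{aRb}=\overline 0$. Since $\overline R$ is left N-reflexive by hypothesis, this yields $\overline b\,\overline R\,\overline a=\overline 0$, which is exactly the statement $bRa\subseteq I$. At this stage every element $bra$ with $r\in R$ is known to land in the reduced ideal $I$.

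The remaining task is to sharpen $bRa\subseteq I$ to $bRa=0$, and this is where reducedness of $I$ enters. First note that $aRb=0$ specialised at $r=1$ gives $ab=0$. Now fix $r\in R$ and set $x=bra\in I$. A direct regrouping gives $x^2=(bra)(bra)=br(ab)ra=0$, using only $ab=0$. Thus $x$ is an element of $I$ whose square is zero; since $I$ is reduced as a ring without identity it has no nonzero nilpotent elements, so $x=bra=0$. As $r\in R$ was arbitrary, $bRa=0$, proving that $R$ is left N-reflexive.

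I expect the only genuinely delicate point to be the bookkeeping that licenses the two uses of the hypotheses in the right order: one must first descend to $R/I$ to force $bRa$ into $I$, and only afterwards invoke the pointwise identity $(bra)^2=0$ so that reducedness can collapse $bra$ to $0$. The algebraic identity itself is immediate once one records that $aRb=0$ already yields $ab=0$; no nilpotency bound on $a$ beyond its mere existence is needed, since the cancellation happens through the single relation $ab=0$ rather than through any power of $a$.
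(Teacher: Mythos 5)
Your proposal is correct and follows exactly the paper's argument: descend to $R/I$ to conclude $bRa\subseteq I$, then use reducedness of $I$ to kill $bRa$. The only difference is that the paper simply asserts ``$bRa$ is nil'' while you supply the justification $(bra)^2=br(ab)ra=0$ via $ab=0$, which is a worthwhile detail but not a different proof.
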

\begin{proof} Let $a$ be nilpotent in $R$ and $b\in R$ with
$aRb=0$. Then $\overline a (R/I) \overline b=0$ and $\overline a$
is nilpotent in $R/I$. By hypothesis, $\overline b (R/I) \overline
a=0$. Hence $bRa\subseteq I$. Since $I$ is reduced and $bRa$ is
nil, $bRa=0$.
\end{proof}
The reduced condition  on the ideal $I$ in Theorem \ref{böl} is
not superfluous.
\begin{ex}{\rm Let $F$ be a field and $I = \left\{\left(\begin{array}{ccc}0&a&b\\0&0&c\\0&0&0\end{array}\right)\mid a,b,c\in F\right\}$
denote the ideal of $D_3(F)$. Then by Examples \ref{ex}(2),
$D_3(F)$ is not left and right N-reflexive. The ring $D_3(F)/I$ is
isomorphic to $F$ and so it is left and right N-reflexive.  Note
that $I$ is not reduced since $I^3 = 0$. }\end{ex}

The class of left (or right) N-reflexive rings are not closed under homomorphic images.
\begin{ex} \label{örnek}{\rm Consider the rings $R$ and $\overline {R} = R/I$ in the Example \ref{örn} where $I$ is the ideal of $R$ generated by $aRb$ and $a^2$. Then $R$ is
reduced, hence left (and right) N-reflexive. Let $\overline a$,
$\overline b\in \overline R$. Then $\overline a\overline
R\overline b = 0$. But $\overline b\overline R\overline a\neq 0$
since $ba\notin I$. Hence $R/I$ is not left N-reflexive. }\end{ex}

Let $e$ be an idempotent in $R$. $e$ is called {\it left
semicentral} if $re = ere$ for all $r\in R$. $S_l(R)$ is the set
of all left semicentral elements. $e$ is called {\it right
semicentral} if $er = ere$ for all $r\in R$. $S_r(R)$ is the set
of all right semicentral elements of $R$. We use $B(R)$ for the
set of central idempotents of $R$. In \cite{BKP}, a ring $R$ is
called {\it left(right) principally quasi-Baer} (or simply, {\it
left(right) p.q.-Baer}) ring if the left(right) annihilator of a
principal right ideal of $R$ is generated by an idempotent.
\begin{thm}\label{semicentral} The following hold for a ring $R$.
\begin{enumerate}
    \item[(1)] If $R$ is right N-reflexive, then $S_l(R)=B(R)$.
    \item[(2)] If $R$ is left N-reflexive, then $S_r(R)=B(R)$.
\end{enumerate}
\end{thm}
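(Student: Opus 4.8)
The plan is to prove each part by establishing only the nontrivial inclusion, since $B(R)\subseteq S_l(R)$ and $B(R)\subseteq S_r(R)$ hold in any ring: a central idempotent is trivially both left and right semicentral. Throughout I will use the standard corner reformulations of semicentrality. An idempotent $e$ lies in $S_l(R)$ exactly when $(1-e)Re=0$ (equivalent to $re=ere$), and $e$ lies in $S_r(R)$ exactly when $eR(1-e)=0$ (equivalent to $er=ere$). Moreover $e$ is central precisely when both corners $eR(1-e)$ and $(1-e)Re$ vanish, so in each part it suffices to produce the one missing corner relation.

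I will carry out part (2) in detail, as part (1) is the exact left--right mirror. Let $R$ be left N-reflexive and take $e\in S_r(R)$, so that $eR(1-e)=0$. For an arbitrary $r\in R$ set $a=(1-e)re$ and $b=1-e$. First I would check that $a$ is nilpotent: since $e(1-e)=0$, one gets $a^2=(1-e)r\,[e(1-e)]\,re=0$, so $a\in\text{nil}(R)$. Next I would verify $aRb=0$: for any $s\in R$ one has $asb=(1-e)r\,[es(1-e)]$, and $es(1-e)\in eR(1-e)=0$, which is exactly where the hypothesis $e\in S_r(R)$ enters. Left N-reflexivity now yields $bRa=0$; taking the middle factor to be $1$ (legitimate since $R$ has identity) gives $ba=0$, and because $b=1-e$ is idempotent, $ba=(1-e)^2re=(1-e)re=a$. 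Hence $a=0$, i.e. $(1-e)re=0$. As $r$ was arbitrary this gives $(1-e)Re=0$, the missing corner; together with $eR(1-e)=0$ it forces $e$ to be central, so $S_r(R)\subseteq B(R)$ and equality follows.

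For part (1) I would run the symmetric argument: with $R$ right N-reflexive and $e\in S_l(R)$ one has $(1-e)Re=0$; setting $a=er(1-e)$ and $b=1-e$ makes $a$ square-zero, and $bRa=0$ follows directly from $(1-e)Re=0$, so right N-reflexivity returns $aRb=0$, whence $ab=0$ collapses (via $b^2=b$) to $a=0$ and yields $eR(1-e)=0$. The only genuinely delicate point is the \emph{choice} of the test pair: one must use the square-zero corner $a=(1-e)re$ (respectively $er(1-e)$) as the nilpotent element and pair it with $b=1-e$, since this is precisely what makes one of $aRb$, $bRa$ automatically zero from the assumed semicentrality while letting the reflexivity hypothesis push the other corner to zero. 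The final collapse $ba=a$ (respectively $ab=a$) relies on $b$ being idempotent and on $1\in R$, so the presence of an identity, assumed throughout the paper, is used essentially.
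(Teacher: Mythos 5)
Your proof is correct and follows essentially the same route as the paper: in each part you apply the N-reflexivity hypothesis to the square-zero corner element ($er(1-e)$ or $(1-e)re$, which is exactly the paper's $ea-eae$) paired with $1-e$, then collapse $ab=0$ (resp. $ba=0$) using idempotency of $1-e$ to kill the missing corner. The only cosmetic difference is that you phrase the conclusion as ``both corners vanish, hence $e$ is central,'' whereas the paper writes it out as $ea=eae=ae$; these are the same computation.
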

\begin{proof} (1) Let $e\in S_l(R)$ and $a\in R$. Then
$(1-e)Re=0$. It follows that $(1-e)Re(a-ae)=(1-e)R(ea-eae)=0$.
Since $ea-eae$ is nilpotent and $R$ is right N-reflexive,
$(ea-eae)R(1-e)=0$. Hence $(ea-eae)(1-e)=0$. This implies
$ea-eae=0$. On the other hand, $(1-e)R(a-ea)e\subseteq (1-e)Re=0$.
Thus $(1-e)R(ae-eae)=0$, and so $(1-e)(ae-eae)=0$. Then
$ae-eae=0$. So we have $ea=ae$, i.e., $e\in B(R)$. Therefore
$S_l(R)\subseteq B(R)$. The reverse inclusion is obvious. \\
(2) Similar to the proof of (1).
\end{proof}

\begin{thm}\label{Baer} Let $R$ be a right p.q-Baer ring. Then the following
conditions are equivalent.
\begin{enumerate}
\item[(1)] $R$ is a semiprime ring.
\item[(2)] $S_l(R)=B(R)$.
\item[(3)] $R$ is a reflexive ring.
\item[(4)] $R$ is a right N-reflexive ring.
\end{enumerate}
\end{thm}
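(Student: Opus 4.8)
The plan is to prove the cyclic chain of equivalences $(1)\Rightarrow(2)\Rightarrow(3)\Rightarrow(4)\Rightarrow(1)$, which will let me reuse the earlier results as much as possible and keep each individual step short. For the right p.q.-Baer hypothesis, the key structural fact I intend to exploit is that, for any $a\in R$, the right annihilator $r_R(aR)$ is of the form $eR$ for some idempotent $e$, and that this idempotent is actually left semicentral, i.e.\ $e\in S_l(R)$. This is a standard feature of p.q.-Baer rings and will be the technical engine connecting semiprimeness, the condition $S_l(R)=B(R)$, and reflexivity.

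First I would prove $(2)\Rightarrow(3)$. Assume $S_l(R)=B(R)$ and suppose $aRb=0$. Then $b\in r_R(aR)$, and by the right p.q.-Baer condition $r_R(aR)=eR$ with $e^2=e\in S_l(R)$. By hypothesis $e$ is then central, so $r_R(aR)=eR$ is a two-sided ideal; in particular $be=b$ and $e$ commutes with everything. From $aRe\subseteq aR\cdot r_R(aR)=0$ and centrality of $e$ one gets $ea=0$, whence $bRa=bRea=b(Re)a$; using $b=be=eb$ and centrality again collapses this product to $0$. Thus $bRa=0$ and $R$ is reflexive. The implication $(3)\Rightarrow(4)$ is immediate, since every reflexive ring is left and right N-reflexive (a reflexive ring is N-reflexive by the remark just after the main definition), so in particular right N-reflexive.

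Next, $(4)\Rightarrow(2)$ is exactly Theorem \ref{semicentral}(1): if $R$ is right N-reflexive then $S_l(R)=B(R)$. It remains to close the loop with $(1)\Rightarrow(2)$ and $(2)\Rightarrow(1)$, or equivalently to tie semiprimeness into the chain. For $(1)\Rightarrow(2)$, I would use that a semiprime ring is reflexive (if $aRb=0$ then $(bRa)(bRa)\subseteq bR(aRb)Ra=0$, so $bRa=0$ by semiprimeness since $bRa$ is an ideal-like set whose square vanishes), hence $S_l(R)=B(R)$ by the same route as above, or more directly: in a semiprime ring every left semicentral idempotent is central, a well-known fact. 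For $(2)\Rightarrow(1)$ I would again invoke the p.q.-Baer structure: if $S_l(R)=B(R)$ and $a^2=0$ with $aR$ a suitable principal ideal, then $r_R(aR)=eR$ with $e$ central, and one shows $a\in eR$ forces $a=ea=ae$ to be killed, yielding $aRa=0\Rightarrow a=0$ and hence no nonzero nilpotents of square zero; a standard reduction then gives semiprimeness.

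The main obstacle I anticipate is the passage $(2)\Rightarrow(1)$, i.e.\ deducing semiprimeness from $S_l(R)=B(R)$ \emph{using only} the right p.q.-Baer hypothesis. The forward implications from semiprime or reflexive are comparatively soft, but recovering semiprimeness requires genuinely combining the annihilator-generated-by-an-idempotent condition with the centrality of left semicentral idempotents to rule out nonzero nilpotent ideals. I expect the crux to be showing that if $I$ is an ideal with $I^2=0$ then $I\subseteq r_R(IR)=eR$ for a central $e$, and that centrality of $e$ together with $eI=I$ forces $I=eI=e^2 R\cap I$ to collapse to $0$; getting the indices and sidedness right there (right annihilator of a \emph{right} ideal, left semicentrality versus central) is where the careful bookkeeping lives, and I would handle it by consistently translating everything into statements about $r_R(aR)=eR$ with $e\in S_l(R)=B(R)$ before computing.
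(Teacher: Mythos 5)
Your proposal is correct in substance but follows a genuinely different, more self-contained route than the paper. The paper disposes of the two hard equivalences by citation: (1) $\Leftrightarrow$ (2) is quoted from \cite[Proposition 1.17(i)]{BKP} and (1) $\Leftrightarrow$ (3) from \cite[Proposition 3.15]{KL}; only (3) $\Rightarrow$ (4) (definitions) and (4) $\Rightarrow$ (2) (Theorem \ref{semicentral}(1)) are argued in the text. You keep those last two steps exactly as the paper does, but you replace both citations by direct arguments resting on one structural fact: $r_R(aR)$ is always a two-sided ideal, so in a right p.q.-Baer ring the idempotent $e$ with $r_R(aR)=eR$ satisfies $re=ere$ for all $r$, i.e.\ $e\in S_l(R)$. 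Granting that, your (2) $\Rightarrow$ (3) (centrality of $e$ collapses $bRa$ to $0$) and your (1) $\Rightarrow$ (2) (semiprime $\Rightarrow$ reflexive $\Rightarrow$ right N-reflexive $\Rightarrow$ (2), or directly via the well-known fact that left semicentral idempotents in semiprime rings are central) are both sound. What your approach buys is independence from the two external references, at the cost of reproving special cases of them.

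One caveat on your (2) $\Rightarrow$ (1): the mechanism is right, but the surrounding bookkeeping is muddled. The hypothesis to feed into the annihilator machine is $aRa=0$, not $a^2=0$ (an element with $a^2=0$ need not lie in $r_R(aR)$). From $aRa=0$ you get $a\in r_R(aR)=eR$ with $e$ central by (2), and $ae\in aRe=0$ forces $a=ea=ae=0$. Since, for rings with identity, the implication $aRa=0\Rightarrow a=0$ \emph{is} semiprimeness, you are then finished; no further ``standard reduction'' remains. In particular your phrase ``no nonzero nilpotents of square zero'' claims far too much: that would say $R$ is reduced, which neither follows nor is true in this setting ($M_2(F)$ satisfies (1)--(4) yet $e_{12}^2=0$ with $e_{12}\neq 0$). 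The slip is harmless because your core argument never uses it, but that sentence is false as written and should be removed.
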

\begin{proof} (1) $\Leftrightarrow$ (2) By \cite[Proposition
1.17(i)]{BKP}.\\
(1) $\Leftrightarrow$ (3) By \cite[Proposition 3.15]{KL}.\\ (3)
$\Rightarrow$ (4) Clear by definitions.\\
(4) $\Rightarrow$ (2) By Theorem \ref{semicentral}(1).
\end{proof}

\begin{thm} Let $R$ be a left p.q-Baer ring. Then the following
conditions are equivalent.
\begin{enumerate}
\item[(1)] $R$ is a semiprime ring.
\item[(2)] $S_r(R)=B(R)$.
\item[(3)] $R$ is a reflexive ring.
\item[(4)] $R$ is a left N-reflexive ring.
\end{enumerate}
\end{thm}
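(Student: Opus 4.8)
The plan is to run the proof of Theorem \ref{Baer} essentially verbatim, interchanging every occurrence of ``left/right'' and every $S_l/S_r$. The four implications transfer because each ingredient is either itself left--right symmetric or appears in a symmetric pair in the cited sources, so no new ideas are needed beyond the symmetry of the notions involved together with Theorem \ref{semicentral}(2).

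First, for (1) $\Leftrightarrow$ (2) I would appeal to the left-handed companion of the result used in Theorem \ref{Baer}: in \cite{BKP} the equivalence of semiprimeness with $S_l(R)=B(R)$ for right p.q.-Baer rings is paired with its mirror, namely the equivalence of semiprimeness with $S_r(R)=B(R)$ for left p.q.-Baer rings. Reading Proposition~1.17 of \cite{BKP} on the left therefore closes this equivalence. Next, (1) $\Leftrightarrow$ (3) requires no change at all, since both ``semiprime'' and ``reflexive'' are two-sided, left--right symmetric notions: the ring $R$ is reflexive exactly when $aRb=0$ implies $bRa=0$, a condition invariant under the swap $a\leftrightarrow b$. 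Hence \cite[Proposition 3.15]{KL} applies directly, independently of which side the p.q.-Baer hypothesis is on.

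For (3) $\Rightarrow$ (4) I would simply invoke the observation recorded immediately after the main definition: every reflexive ring is left (and right) N-reflexive, because the defining implication $aRb=0 \Rightarrow bRa=0$ is demanded for all $a,b$, in particular for nilpotent $a$. Finally, (4) $\Rightarrow$ (2) is exactly the content of Theorem \ref{semicentral}(2): a left N-reflexive ring satisfies $S_r(R)=B(R)$. Chaining these yields $(4)\Rightarrow(2)\Leftrightarrow(1)\Leftrightarrow(3)\Rightarrow(4)$, so all four statements are equivalent.

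The only genuine point to verify --- the ``main obstacle,'' such as it is --- is the existence of the left-handed form of the \cite{BKP} characterization used in the first step; everything else is forced by the symmetry of semiprimeness and reflexivity and by the already-established Theorem \ref{semicentral}(2). Since the principally quasi-Baer theory of \cite{BKP} is developed symmetrically on both sides, I expect this to be routine bookkeeping rather than a real difficulty, and the proof can be stated in the same terse citation-by-citation style as Theorem \ref{Baer}.
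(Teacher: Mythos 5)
Your proposal coincides with the paper's own proof, which literally reads ``Similar to the proof of Theorem \ref{Baer}'': the intended argument is exactly the left--right mirroring you describe, citing the left-handed analogue of \cite[Proposition 1.17]{BKP}, the dual of \cite[Proposition 3.15]{KL}, the trivial implication reflexive $\Rightarrow$ left N-reflexive, and Theorem \ref{semicentral}(2). The only slightly glib point is your claim that \cite[Proposition 3.15]{KL} applies ``directly'' --- strictly one invokes its left-right dual (equivalently, applies it to the opposite ring, noting that semiprimeness and reflexivity are invariant under passing to $R^{\mathrm{op}}$ while left p.q.-Baer becomes right p.q.-Baer) --- but this is precisely the bookkeeping the paper also suppresses.
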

\begin{proof} Similar to the proof of Theorem \ref{Baer}.
\end{proof}

\begin{prop}
Let $R$ be a ring and $e^2=e\in R$. Assume that $R$ is an N-reflexive. Then $aRe=0$ implies $ea=0$ for any nilpotent element $a$ of $R$.
\end{prop}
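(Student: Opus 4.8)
The plan is to read the hypothesis $aRe=0$ as a direct instance of the left N-reflexive condition and then exploit the identity of $R$. Since $a$ is nilpotent and $e$ is an arbitrary element of $R$, the equality $aRe=0$ is exactly the premise in the definition of left N-reflexivity, taking the nilpotent element to be $a$ and the second element to be $b=e$. Because $R$ is N-reflexive, it is in particular left N-reflexive, so I would immediately conclude $eRa=0$.

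Next I would pass from this two-sided annihilation to the one-sided equation we want. The relation $eRa=0$ says $era=0$ for every $r\in R$; specializing the middle factor to $r=1$, which is available since all rings in the paper carry an identity, gives $e\cdot 1\cdot a=ea=0$, as required.

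The argument uses only the left N-reflexivity contained in the N-reflexivity hypothesis; the right N-reflexivity plays no role here. I do not expect any genuine obstacle: the only two points to notice are that the nilpotence of $a$ is precisely what licenses the application of the left N-reflexive condition with $b=e$, and that the presence of the unit is what upgrades the set equality $eRa=0$ to the element equation $ea=0$.
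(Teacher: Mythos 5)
Your proposal is correct and is essentially identical to the paper's own proof: both apply left N-reflexivity (contained in N-reflexivity) to the nilpotent $a$ with $b=e$ to get $eRa=0$, and then specialize the middle factor to $1$ to obtain $ea=0$.
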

\begin{proof} Suppose that $aRe=0$ for any $a\in$ nil$(R)$.
Since $R$ is N-reflexive, $eRa=0$, and so $ea=0$.
\end{proof}

\noindent \textbf{\emph{Question:}} If  a ring $R$ is N-reflexive, then is $R$ a $2$-primal ring?\\
\noindent There is a $2$-primal ring which is not N-reflexive.
\begin{ex}{\rm
Consider the $2$ by $2$ upper triangular matrix ring $R=\left(
                                                   \begin{array}{cc}
                                                     \Bbb{Z}_{2} & \Bbb{Z}_{2} \\
                                                     0 & \Bbb{Z}_{2} \\
                                                   \end{array}
                                                 \right)$ over the field $\Bbb{Z}_{2}$ of
integers modulo $2$. For $A=\left(
                              \begin{array}{cc}
                                0 & 1 \\
                                0 & 0 \\
                              \end{array}
                            \right)\in$ nil$(R)$ and $B=\left(
                                                           \begin{array}{cc}
                                                             1 & 1 \\
                                                             0 & 0 \\
                                                           \end{array}
                                                         \right)\in R$,
we have $ARB=0$ but $BRA\neq 0$. But $R$ is $2$-primal by
\cite[Proposition 2.5]{BHL}. }\end{ex}
\begin{prop} Let $\{R_i\}_{i\in I}$ be a class of rings. Then $R=\prod\limits_{i\in I}
R_i$ is left N-reflexive if and only if $R_i$ is left N-reflexive
for each $i\in I$.
\end{prop}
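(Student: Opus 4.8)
The plan is to translate everything into coordinates and exploit that both ring operations in $R=\prod_{i\in I}R_i$ are computed componentwise, so that a left N-reflexivity condition on $R$ decouples into the corresponding conditions on the factors. I would first record the single structural fact I need: if $a=(a_i)_{i\in I}$ is nilpotent in $R$, say $a^n=0$, then $a_i^n=0$ for every $i$, so each coordinate $a_i$ lies in $\mbox{nil}(R_i)$; and conversely, if $c\in\mbox{nil}(R_j)$ for one fixed index $j$, then the element of $R$ having $c$ in coordinate $j$ and $0$ elsewhere is nilpotent in $R$. Only these two directions are needed, which is fortunate, because for infinite $I$ one has $\mbox{nil}(R)\subsetneq\prod_{i}\mbox{nil}(R_i)$ in general: a tuple of nilpotents need not admit a common nilpotency exponent. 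This asymmetry is the one point to stay alert about, but it never obstructs the argument below.

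For the implication that each factor inherits left N-reflexivity, I would fix $j\in I$, take a nilpotent $a_j\in R_j$ and an arbitrary $b_j\in R_j$ with $a_jR_jb_j=0$, and lift them to the elements $a,b\in R$ supported only in coordinate $j$ (with value $a_j$, resp.\ $b_j$). By the structural fact, $a$ is nilpotent in $R$, and since multiplication is componentwise, for any $r=(r_i)\in R$ the product $arb$ is supported in coordinate $j$ with value $a_jr_jb_j$; hence $aRb=0$ is exactly the hypothesis $a_jR_jb_j=0$. Left N-reflexivity of $R$ then yields $bRa=0$, whose coordinate $j$ reads $b_jR_ja_j=0$, as required.

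Conversely, assuming every $R_i$ is left N-reflexive, I would take $a=(a_i)\in\mbox{nil}(R)$ and $b=(b_i)\in R$ with $aRb=0$. Because the coordinates of a generic $r=(r_i)\in R$ can be chosen independently, the single relation $aRb=0$ is equivalent to $a_iR_ib_i=0$ holding simultaneously for every $i$, and each $a_i$ is nilpotent by the structural fact. Applying left N-reflexivity of $R_i$ coordinatewise gives $b_iR_ia_i=0$ for all $i$, and reassembling shows $b_ir_ia_i=0$ in every coordinate, i.e.\ $bRa=0$; thus $R$ is left N-reflexive. The only genuinely nontrivial point in the whole proof is the one flagged above, namely checking that nilpotency of $a$ transfers correctly between $R$ and its factors, and here it is harmless precisely because I never need to promote a coordinatewise-nilpotent tuple to a nilpotent element of $R$.
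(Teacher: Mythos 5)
Your proof is correct and takes essentially the same approach as the paper: your converse direction is exactly the paper's componentwise argument, and your forward direction---lifting $a_j$, $b_j$ to tuples supported in coordinate $j$---is just an unwound version of the paper's appeal to its corner-ring proposition, since that embedding realizes $R_j$ as $e_jRe_j$ for the idempotent $e_j$ having $1_{R_j}$ in slot $j$ and $0$ elsewhere. Your cautionary remark that $\mathrm{nil}(R)\subsetneq\prod_{i\in I}\mathrm{nil}(R_i)$ can happen for infinite $I$ is accurate and a nice point of care the paper passes over silently, though, as you observe, the argument never needs to promote a coordinatewise-nilpotent tuple to a nilpotent element of $R$.
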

\begin{proof}
Assume that $R=\prod\limits_{i\in I} R_i$ is left N-reflexive. By
Proposition \ref{ilk}, for each $i\in  I$, $R_i$ is left
N-reflexive. Conversely, let $a=(a_i)\in R$ be nilpotent and
$b=(b_i)\in R$ with $aRb=0$. Then $a_iR_ib_i=0$ for each $i\in I$.
Since each $a_i$ is nilpoent in $R_i$ for each $i\in I$, by
hypothesis, $b_iR_ia_i=0$ for every $i\in I$. Hence $bRa=0$. This
completes the proof.
\end{proof}

\section{Extensions of N-reflexive rings}

In this section, we study some kinds of extensions of N-reflexive
rings to start with, the Dorroh extension $D(R,\Bbb{Z})=\{(r,n)\mid r\in R, n\in \Bbb{Z}\}$ of a ring $R$ is a ring with operations
$(r_1,n_1)+(r_2,n_2)=(r_1+r_2, n_1+n_2)$ and $(r_1,n_1)(r_2,n_2)=(r_1r_2+n_1r_2+n_2r_1, n_1n_2)$, where $r_i\in R$ and $n_i\in \Bbb{Z}$
for $i=1,2$.
\begin{prop} A ring $R$ is left N-reflexive if and only if the Dorroh extension $D(R,\Bbb
Z)$ of $R$ is left N-reflexive.
\end{prop}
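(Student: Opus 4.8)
The plan is to transport the left N-reflexivity condition across the natural embedding $R\hookrightarrow D(R,\Bbb{Z})$, $r\mapsto(r,0)$, after first identifying the nilpotent elements of the Dorroh extension. First I would note that the second coordinate of $(r,n)^{k}$ equals $n^{k}$, so an element $(r,n)$ can be nilpotent only if $n=0$; and since $(r,0)^{k}=(r^{k},0)$, we get $\mbox{nil}(D(R,\Bbb{Z}))=\{(r,0)\mid r\in\mbox{nil}(R)\}$. This reduces every nilpotent of $D(R,\Bbb{Z})$ to the simple form $(a,0)$ with $a\in\mbox{nil}(R)$, which is what makes both implications manageable.

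For the implication that $R$ left N-reflexive forces $D(R,\Bbb{Z})$ left N-reflexive, I would take a nilpotent $(a,0)$ and an arbitrary $(b,m)$ with $(a,0)D(R,\Bbb{Z})(b,m)=0$. Expanding a general triple product gives $(a,0)(s,k)(b,m)=(asb+kab+mas+mka,\,0)$, so the hypothesis is that $asb+kab+mas+mka=0$ for all $s\in R$ and $k\in\Bbb{Z}$. The key step is to absorb the integer $m$ into $R$ using its identity: specializing $k=0$ rewrites the relation as $as(b+m1_{R})=0$ for all $s$, i.e.\ $aR(b+m1_{R})=0$. Now left N-reflexivity of $R$, applied to the nilpotent $a$ and the element $b+m1_{R}\in R$, yields $(b+m1_{R})Ra=0$.

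Then I would expand the reverse triple product $(b,m)(s,k)(a,0)=\big((b+m1_{R})sa+k(b+m1_{R})a,\,0\big)$ and observe that both terms vanish: the first because $(b+m1_{R})Ra=0$, and the second because setting $s=1_{R}$ in that same relation gives $(b+m1_{R})a=0$. Hence $(b,m)D(R,\Bbb{Z})(a,0)=0$, proving $D(R,\Bbb{Z})$ is left N-reflexive.

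For the converse, assuming $D(R,\Bbb{Z})$ is left N-reflexive, I would start from $a\in\mbox{nil}(R)$ and $b\in R$ with $aRb=0$ and lift to $(a,0),(b,0)$. Here $(a,0)(s,k)(b,0)=(asb+kab,\,0)$, and $aRb=0$ (which also gives $ab=0$ on setting $s=1_{R}$) forces this to be $0$ for all $(s,k)$; thus $(a,0)D(R,\Bbb{Z})(b,0)=0$. Left N-reflexivity of the extension then gives $(b,0)D(R,\Bbb{Z})(a,0)=0$, and reading off the products $(b,0)(s,0)(a,0)=(bsa,\,0)$ yields $bRa=0$. The only real obstacle is the bookkeeping in the forward direction: recognizing that the integer slot can be absorbed as $m1_{R}$ so that the whole hypothesis collapses to the single annihilator condition $aR(b+m1_{R})=0$, to which left N-reflexivity of $R$ directly applies. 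The Dorroh multiplication, which mixes the ring part and the integer part, is what makes this identification slightly delicate; everything past it is formal.
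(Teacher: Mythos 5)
Your proof is correct and follows essentially the same route as the paper: identify $\mathrm{nil}(D(R,\Bbb Z))=\{(r,0)\mid r\in \mathrm{nil}(R)\}$, absorb the integer coordinate into $R$ via $b+m1_R$ so that the hypothesis collapses to $aR(b+m1_R)=0$, apply left N-reflexivity of $R$ to the pair $(a,\,b+m1_R)$, and for the converse restrict the extension's reflexivity to elements of the form $(r,0)$. The only difference is cosmetic: you expand the triple products termwise and justify the description of the nilpotents, while the paper states the nilpotent set without proof and writes the product compactly as $\bigl((a+b1_R)(x+y1_R)r,\,0\bigr)$.
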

\begin{proof} Firstly, we note that nil$(D(R,\Bbb
Z))=\{(r,0)\mid r\in \mbox{nil}(R)\}$. For the necessity, let
$(a,b)\in D(R,\Bbb Z)$ and $(r,0)\in $ nil$(D(R,\Bbb Z))$ with
$(r,0)D(R,\Bbb Z)(a,b)=0$. Then $(r,0)(s,0)(a,b)=0$ for every
$s\in R$. Hence $rs(a+b1_R)=0$ for all $s\in R$, and so
$rR(a+b1_R)=0$. Since $R$ is left N-reflexive, $(a+b1_R)Rr=0$.
Thus $(a,b)(x,y)(r,0)=((a+b1_R)(x+y1_R)r,0)=0$ for any $(x,y)\in
D(R,\Bbb Z)$. For the sufficiency, let $s\in R$ and $r\in $
nil$(R)$ with $rRs=0$. We have $(r,0)\in \mbox{nil}(D(R,
\Bbb{Z}))$. This implies $(r,0)D(R,\Bbb Z)(s,0)=0$. By hypothesis,
$(s,0)D(R,\Bbb Z)(r,0)=0$. In particular, $(s,0)(x,0)(r,0)=0$ for
all $x\in R$. Therefore $sRr=0$. So $R$ is left N-reflexive.
\end{proof}

Let $R$ be a ring and $S$ be the subset of $R$ consisting of
central regular elements. Set $S^{-1}R=\{s^{-1}r\mid s\in S, r\in
R\}$. Then $S^{-1}R$ is a ring with an identity.

\begin{prop} For a ring $R$, $R[x]$ is left N-reflexive if and
only if $(S^{-1}R)[x]$ is left N-reflexive.
\end{prop}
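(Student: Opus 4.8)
The plan is to reduce everything to a single localization lemma. The crucial observation is that the central regular set $S\subseteq R$ consists of central regular elements of $R[x]$ as well: a central element of $R$ commutes with every coefficient and hence with every polynomial, and if $sf(x)=0$ for $s\in S$ then $s$ kills each coefficient of $f(x)$, forcing $f(x)=0$ by regularity of $s$ in $R$. Since $S$ is multiplicatively closed (a product of finitely many central regular elements is again central and regular) and contains $1$, every element of $(S^{-1}R)[x]$ can be put over a common denominator from $S$, yielding a natural ring isomorphism $(S^{-1}R)[x]\cong S^{-1}(R[x])$. Thus it suffices to prove the following: for a ring $T$ and a multiplicatively closed set $S$ of central regular elements of $T$, $T$ is left N-reflexive if and only if $S^{-1}T$ is left N-reflexive; one then applies this with $T=R[x]$.

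For this lemma I would first record two facts. Because each $s\in S$ is regular, the canonical map $T\to S^{-1}T$, $a\mapsto a$, is injective, so I may regard $T\subseteq S^{-1}T$ and pass equalities of elements of $T$ back and forth freely. Moreover, for $s\in S$ and $a\in T$ the element $s^{-1}a$ of $S^{-1}T$ is nilpotent if and only if $a$ is: since $s^{-1}$ is a central unit, $(s^{-1}a)^n=s^{-n}a^n$, and $a^n=0$ in $S^{-1}T$ forces $a^n=0$ in $T$ by injectivity. In particular $\mathrm{nil}(S^{-1}T)=\{\,s^{-1}a\mid s\in S,\ a\in\mathrm{nil}(T)\,\}$.

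Assume $T$ is left N-reflexive and take $\alpha\in\mathrm{nil}(S^{-1}T)$, $\beta\in S^{-1}T$ with $\alpha(S^{-1}T)\beta=0$. Writing $\alpha=s^{-1}a$ and $\beta=t^{-1}b$ with $s,t\in S$ and $a\in\mathrm{nil}(T)$, $b\in T$, and cancelling the central units $s^{-1},t^{-1}$, the hypothesis becomes $a(S^{-1}T)b=0$; restricting to $T\subseteq S^{-1}T$ gives $aTb=0$ in $T$. Left N-reflexivity of $T$ then yields $bTa=0$. Now for any $\gamma=u^{-1}c\in S^{-1}T$ (with $u\in S$ central) we get $b\gamma a=u^{-1}(bca)=0$ because $bca\in bTa=0$, so $\beta(S^{-1}T)\alpha=0$ and $S^{-1}T$ is left N-reflexive. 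The converse runs symmetrically: given $a\in\mathrm{nil}(T)$, $b\in T$ with $aTb=0$, the identity $a\gamma b=u^{-1}(acb)=0$ shows $a(S^{-1}T)b=0$, whence $b(S^{-1}T)a=0$ by left N-reflexivity of $S^{-1}T$, and restricting to $T$ gives $bTa=0$.

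The only step demanding genuine care — the main obstacle — is the interaction between nilpotency and the denominators: one must verify that nilpotence transfers in both directions and that the localized condition $\alpha(S^{-1}T)\beta=0$ can be cleared of denominators to the honest relation $aTb=0$ over $T$. Both hinge entirely on the centrality and regularity of $S$, which make every $s^{-1}$ a central unit and the embedding $T\hookrightarrow S^{-1}T$ injective; once these are secured, the remaining manipulations are purely formal.
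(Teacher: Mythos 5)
Your proof is correct, but it is organized differently from the paper's. The paper argues directly on the two polynomials: writing the nilpotent $f(x)=\sum_i s_i^{-1}a_ix^i$ and $g(x)=\sum_j t_j^{-1}b_jx^j$ in $(S^{-1}R)[x]$, it multiplies by the products $s=s_0\cdots s_m$ and $t=t_0\cdots t_n$ of the coefficient denominators to get $f_1=sf$, $g_1=tg\in R[x]$, observes that $f_1$ is still nilpotent and $f_1R[x]g_1=0$, applies left N-reflexivity of $R[x]$, and then divides the denominators back out. You instead factor the statement through two general facts: the isomorphism $(S^{-1}R)[x]\cong S^{-1}(R[x])$ (valid because $S$ remains central and regular in $R[x]$), and a localization lemma asserting that left N-reflexivity transfers both ways between $T$ and $S^{-1}T$ for any multiplicatively closed set $S$ of central regular elements of $T$. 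The underlying mechanics are identical in both arguments --- denominators are central units, so they can be cleared without disturbing nilpotency or the vanishing of $aTb$ --- but your decomposition is more modular and buys something concrete: taking $T=R[x]$ and $S=\{1,x,x^2,\dots\}$, your lemma immediately yields the paper's subsequent corollary that $R[x]$ is left N-reflexive if and only if $R[x;x^{-1}]$ is, whereas the paper states that corollary separately. The paper's route, in exchange, is shorter, since it needs neither the isomorphism $(S^{-1}R)[x]\cong S^{-1}(R[x])$ nor a free-standing lemma.
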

\begin{proof} For the necessity, let $f(x) = \displaystyle\sum_{i=0}^m s^{-1}_ia_ix^i$ be nilpotent and
$g(x)=\displaystyle\sum_{i=0}^n t^{-1}_ib_ix^i\in (S^{-1}R)[x]$
satisfy $f(x)(S^{-1}R)[x]g(x)=0$. Let $s = s_0s_1 \dots s_m$ and
$t = t_0t_1t_2 \dots t_n$. Then $f_1(x) = sf(x)$ is nilpotent and
$g_1(x)=tg(x)\in R[x]$ and $f_1(x)R[x]g_1(x) = 0$. By hypothesis,
$g_1(x)R[x]f_1(x) = 0$. Then $g(x)(S^{-1}R)[x]f(x) = 0$. The
sufficiency is clear.
\end{proof}

\begin{cor} For a ring $R$, $R[x]$ is left N-reflexive if and
only if $R[x;x^{-1}]$ is left N-reflexive.
\end{cor}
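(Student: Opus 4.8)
The plan is to realize the Laurent polynomial ring as a localization of $R[x]$ at a multiplicatively closed set of central regular elements and then run the same clearing-of-denominators argument used for the preceding Proposition. Concretely, I would set $S=\{1,x,x^{2},\dots\}$. Since $x$ is central in $R[x]$ and is a non-zerodivisor, $S$ is a multiplicatively closed subset of central regular elements of $R[x]$, and the associated localization $S^{-1}R[x]$ is exactly $R[x;x^{-1}]$. Thus the claim becomes the statement that localizing $R[x]$ at $S$ preserves left N-reflexivity in both directions, which is precisely the analogue of the preceding Proposition with the roles of $R$ and $S^{-1}R$ now played by $R[x]$ and $R[x;x^{-1}]$.

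For the sufficiency (if $R[x;x^{-1}]$ is left N-reflexive then so is $R[x]$) I would argue exactly as in the Proposition: if $f\in R[x]$ is nilpotent and $g\in R[x]$ satisfies $fR[x]g=0$, then for any $h=s^{-1}h'\in R[x;x^{-1}]$ with $h'\in R[x]$ and $s\in S$ one has $fhg=s^{-1}(fh'g)=0$, because $s$ is central and $fh'g=0$; hence $fR[x;x^{-1}]g=0$. Left N-reflexivity of $R[x;x^{-1}]$ then gives $gR[x;x^{-1}]f=0$, and restricting to the subring $R[x]\subseteq R[x;x^{-1}]$ yields $gR[x]f=0$. So $R[x]$ is left N-reflexive.

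For the necessity, which is the substantive direction, I would clear negative exponents rather than denominators. Given a nilpotent $f\in R[x;x^{-1}]$ and $g\in R[x;x^{-1}]$ with $fR[x;x^{-1}]g=0$, I would choose $N$ large enough that $f_{1}=x^{N}f$ and $g_{1}=x^{N}g$ both lie in $R[x]$. Since $x$ is central, $f_{1}$ is again nilpotent, and for every $h\in R[x]$ one computes $f_{1}hg_{1}=x^{2N}(fhg)=0$, so $f_{1}R[x]g_{1}=0$. Applying the hypothesis that $R[x]$ is left N-reflexive gives $g_{1}R[x]f_{1}=0$; then for an arbitrary $h=x^{-M}h'\in R[x;x^{-1}]$ with $h'\in R[x]$ one gets $ghf=x^{-2N-M}\,g_{1}h'f_{1}=0$, which is legitimate because $x$ is central and regular. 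Hence $gR[x;x^{-1}]f=0$ and $R[x;x^{-1}]$ is left N-reflexive.

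The only point that needs care, and the place where the hypotheses are genuinely used, is the interplay between nilpotency and the annihilation conditions under multiplication by powers of $x$: one must know that $x$ being central makes $x^{N}f$ nilpotent whenever $f$ is and lets the central factors be pulled out of the triple products, while $x$ being regular guarantees that passing from $f$ to $x^{N}f$ and back loses no information. Since this is exactly the mechanism already validated in the proof of the preceding Proposition, I expect no new difficulty to arise; the corollary is essentially a transcription of that argument with the choice $S=\{1,x,x^{2},\dots\}$.
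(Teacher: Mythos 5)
Your proposal is correct and is exactly the intended derivation: the paper states this as a corollary of the preceding localization proposition, with $R[x;x^{-1}]$ realized as the localization of $R[x]$ at the central regular set $\{1,x,x^{2},\dots\}$, and the clearing-of-denominators (here, clearing-of-negative-exponents) mechanism you spell out is the same one used in that proposition's proof. Your write-up just makes explicit the details the paper leaves implicit.
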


According to \cite{Hi}, a ring $R$ is said to be {\it
quasi-Armendariz} if whenever $f(x)=\sum_{i=0}^m a_ix^i$ and
$g(x)=\sum_{j=0}^n b_jx^j\in R[x]$ satisfy $f(x)R[x]g(x)=0$, then
$a_iRb_j=0$ for each $i,j$.

The left N-reflexivity or right N-reflexivity and the quasi-Armendariz property of rings do
not imply each other.
\begin{exs}{\rm (1) Let $F$ be a field and consider the ring $R=\left(
                                                          \begin{array}{cc}
                                                            F & F \\
                                                            0 & F \\
                                                          \end{array}
                                                        \right)$. Then $R$ is quasi-Armendariz by \cite[Corollary 3.15]{Hi}.
However, $R$ is not left N-reflexive. For $A=\left(
                                         \begin{array}{cc}
                                           0 & 1 \\
                                           0 & 0 \\
                                         \end{array}
                                       \right)\in$ nil$(R)$ and $B=\left(
                                                                   \begin{array}{cc}
                                                                     1 & 1 \\
                                                                     0 & 0 \\
                                                                   \end{array}
                                                                 \right)\in R$, we have $ARB=0$ but $BA\neq 0$.\\
(2) Consider the ring $R=\left\{ \left(\begin{array}{cc} a & b \\0 & a \\\end{array}\right)\mid a, b\in \Bbb{Z}_{4} \right\}$. Since $R$ is commutative,  $R$ is N-reflexive. For $f(x)=\left(
         \begin{array}{cc}
           0 & 1 \\
           0 & 0 \\
         \end{array}
       \right)+\left(
                 \begin{array}{cc}
                   2 & 1 \\
                   0 & 2 \\
                 \end{array}
               \right)x$ and $g(x)=\left(
                                     \begin{array}{cc}
                                       0 & 1 \\
                                       0 & 0 \\
                                     \end{array}
                                   \right)+\left(
                                             \begin{array}{cc}
                                               2 & 3 \\
                                               0 & 2 \\
                                             \end{array}
\right)x\in R[x]$, we have $f(x)Rg(x)=0$, and so by \cite[Lemma 2.1]{Hi} $f(x)R[x]g(x)=0$, but
$\left(
   \begin{array}{cc}
     2 & 1 \\
     0 & 2 \\
   \end{array}
 \right)R\left(
           \begin{array}{cc}
             0 & 1 \\
             0 & 0 \\
           \end{array}
         \right)\neq 0$. Thus $R$ is not quasi-Armendariz.}
\end{exs}

\begin{prop}\label{pol} Let $R$ be a quasi-Armendariz ring. Assume that coefficients of any nilpotent polynomial in $R[x]$ are nilpotent in $R$. Then $R$ is left N-reflexive if and only if $R[x]$ is left N-reflexive.
\end{prop}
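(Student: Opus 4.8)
The plan is to prove both implications separately, with essentially all of the work concentrated in the forward direction. The converse is easy and uses neither hypothesis: assuming $R[x]$ is left N-reflexive, I would take $a\in \mbox{nil}(R)$ and $b\in R$ with $aRb=0$, regard them as constant polynomials, and first upgrade $aRb=0$ to $aR[x]b=0$. This upgrade is immediate, since for any $h(x)=\sum_k c_kx^k$ one has $ah(x)b=\sum_k(ac_kb)x^k=0$. As $a$ is still nilpotent in $R[x]$, left N-reflexivity of $R[x]$ yields $bR[x]a=0$, and restricting the middle factor to constants recovers $bRa=0$. Hence $R$ is left N-reflexive.

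For the forward direction, suppose $R$ is left N-reflexive and take a nilpotent $f(x)=\sum_{i=0}^m a_ix^i\in R[x]$ together with $g(x)=\sum_{j=0}^n b_jx^j\in R[x]$ satisfying $f(x)R[x]g(x)=0$; the goal is $g(x)R[x]f(x)=0$. The first key step is to apply the quasi-Armendariz property to $f(x)R[x]g(x)=0$, which converts this polynomial annihilation into the coefficient-wise conditions $a_iRb_j=0$ for all $i,j$. The second key step invokes the standing hypothesis that the coefficients of any nilpotent polynomial are nilpotent: since $f(x)$ is nilpotent, each $a_i\in \mbox{nil}(R)$. Now every pair $(a_i,b_j)$ satisfies $a_iRb_j=0$ with $a_i$ nilpotent, so the left N-reflexivity of $R$ gives $b_jRa_i=0$ for all $i,j$.

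It then remains to reassemble these coefficient conditions back into $g(x)R[x]f(x)=0$. For any $h(x)=\sum_k c_kx^k$, the product $g(x)h(x)f(x)$ is a sum of terms $b_jc_ka_ix^{i+j+k}$, each of which vanishes because $b_jRa_i=0$ forces $b_jc_ka_i=0$; since $h(x)$ was arbitrary, $g(x)R[x]f(x)=0$ as desired. I expect the main obstacle to be conceptual rather than computational, namely recognizing that the two hypotheses play strictly complementary roles: the quasi-Armendariz property transfers the vanishing from the polynomial level to the coefficient level, while the nilpotent-coefficient hypothesis is exactly what licenses the application of the left N-reflexivity of $R$, whose definition insists that the left factor be nilpotent. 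Once both reductions are in place, the final reassembly is automatic, because annihilation of the form $b_jRa_i=0$ is insensitive to the intervening coefficient $c_k$.
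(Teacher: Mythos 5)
Your proposal is correct and follows essentially the same route as the paper's proof: quasi-Armendariz converts $f(x)R[x]g(x)=0$ into $a_iRb_j=0$, the nilpotent-coefficient hypothesis makes each $a_i$ nilpotent so that left N-reflexivity of $R$ gives $b_jRa_i=0$, and these reassemble into $g(x)R[x]f(x)=0$, with the converse handled by viewing constants as polynomials. The only difference is that you spell out the coefficient-expansion steps that the paper leaves implicit.
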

\begin{proof} Suppose that $R$ is left N-reflexive and $f(x)=\sum_{i=0}^m
a_ix^i$, $g(x)=\sum_{j=0}^n b_jx^j\in R[x]$ with $f(x)R[x]g(x)=0$
and $f(x)$ nilpotent. The ring $R$ being quasi-Armendariz implies
$a_iRb_j = 0$ for all $i$ and $j$, and $f(x)$ being nilpotent
gives rise to all $a_0$, $a_1$, $a_2$, $\cdots$, $a_m$ nilpotent.
By supposition $b_jRa_i=0$ for all $i$ and $j$. Therefore
$g(x)R[x]f(x) = 0$, and so $R[x]$ is left N-reflexive. Conversely,
assume that $R[x]$ is left N-reflexive. Let $a\in R$ be nilpotent
and $b\in R$ any element with $aRb = 0$. Then $aR[x]b = 0$. Hence
$bR[x]a = 0$. Thus $bRa = 0$ and $R$ is left N-reflexive.
\end{proof}
Note that in commutative case, the coefficients of any nilpotent
polynomial are nilpotent. However, this is not the case for
noncommutative rings in general. Therefore in Proposition
\ref{pol} the assumption ``coefficients of any nilpotent
polynomial in $R[x]$ are nilpotent in $R$" is not superfluous as
the following example shows.
\begin{ex} {\rm Let $S = M_n(R)$ for a ring  $R$.
Consider the polynomial $f(x) = e_{21} + (e_{11} - e_{22})x -
e_{12}x^2\in S[x]$, where the $e_{ij}$'s are the matrix units.
Then $f(x)^2 = 0$, but $e_{11} - e_{22}$ is not nilpotent.}
\end{ex}
\section{Applications}
In this section, we study some subrings of full matrix rings
whether or not they are left or right N-reflexive rings.

 {\bf The
rings $H_{(s,t)}(R)$ :} Let $R$ be a ring and  $s, t$ be in the
center of $R$. Let\begin{center} $H_{(s,t)}(R) = \left
\{\begin{pmatrix}a&0&0\\c&d&e\\0&0&f
\end{pmatrix}\in M_3(R)\mid a, c, d, e, f\in R, a - d = sc, d - f = te\right \}$.\end{center}
Then $H_{(s,t)}(R)$ is a subring of $M_3(R)$. Note that any
element $A$ of $H_{(s,t)}(R)$ has the form
$\begin{pmatrix}sc+te+f&0&0\\c&te+f&e\\0&0&f\end{pmatrix}$.
\begin{lem}\label{nil} Let $R$ be a ring, and let $s, t$ be in the center of $R$. Then the set of all nilpotent elements of $H_{(s, t)}(R)$ is \begin{center} nil$(H_{(s, t)}(R)) = \left
\{\begin{pmatrix}a&0&0\\c&d&e\\0&0&f\end{pmatrix}\in H_{(s,
t)}(R)\mid a, d, f\in  nil(R), c, e\in R\right \}$.\end{center}
\end{lem}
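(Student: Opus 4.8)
The plan is to obtain a closed formula for the $n$th power of a generic element of $H_{(s,t)}(R)$ and to read off nilpotency directly from it. Write $A=\begin{pmatrix}a&0&0\\c&d&e\\0&0&f\end{pmatrix}\in H_{(s,t)}(R)$. A direct computation gives $A^{2}=\begin{pmatrix}a^{2}&0&0\\ca+dc&d^{2}&de+ef\\0&0&f^{2}\end{pmatrix}$, and I would prove by induction on $n$ that
\[
A^{n}=\begin{pmatrix}a^{n}&0&0\\[2pt] \sum_{i+j=n-1}d^{i}ca^{j} & d^{n} & \sum_{i+j=n-1}d^{i}ef^{j}\\[2pt]0&0&f^{n}\end{pmatrix}.
\]
The inductive step is just $A^{n+1}=A\cdot A^{n}$: the $(2,1)$-entry becomes $ca^{n}+d\bigl(\sum_{i+j=n-1}d^{i}ca^{j}\bigr)=\sum_{i+j=n}d^{i}ca^{j}$, and symmetrically for the $(2,3)$-entry, while the remaining entries are immediate. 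Note that the defining constraints $a-d=sc$ and $d-f=te$ play no role in this computation; they only certify that $A$ lies in the ring, so the whole question reduces to the behaviour of the three diagonal entries $a,d,f$.

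For the inclusion $\mbox{nil}(H_{(s,t)}(R))\subseteq\{\dots\}$, suppose $A^{n}=0$. Reading the diagonal of the formula yields $a^{n}=d^{n}=f^{n}=0$, hence $a,d,f\in\mbox{nil}(R)$ at once; this direction requires nothing beyond the power formula.

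For the reverse inclusion, assume $a,d,f\in\mbox{nil}(R)$, say $a^{k}=d^{l}=f^{m}=0$, and I would show the two off-diagonal sums vanish for large $n$. In $\sum_{i+j=n-1}d^{i}ca^{j}$, a term survives only when $i\le l-1$ and $j\le k-1$, which forces $i+j\le k+l-2$; so once $n-1\ge k+l-1$, i.e. $n\ge k+l$, every term has $i\ge l$ (killing $d^{i}$) or $j\ge k$ (killing $a^{j}$), and the sum is $0$. The same pigeonhole argument applied to $\sum_{i+j=n-1}d^{i}ef^{j}$ makes it vanish for $n\ge l+m$. Taking $n\ge\max\{k+l,\,l+m\}$ sends all five entries of $A^{n}$ to zero, so $A$ is nilpotent and lies in the asserted set. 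The only mildly delicate point is keeping the noncommutative order of factors straight in both the induction and the counting argument; once the explicit formula for $A^{n}$ is established, both inclusions are forced, so I anticipate no genuine obstacle.
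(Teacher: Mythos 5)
Your proof is correct and takes essentially the same approach as the paper's: both reduce nilpotency of $A$ to nilpotency of the diagonal entries $a,d,f$ by examining powers of $A$. The only difference is one of detail — you derive the explicit formula for $A^{n}$ and the pigeonhole bound on the off-diagonal sums, whereas the paper simply asserts $a^{n}=d^{n}=f^{n}=0$ in one direction and $A^{2p}=0$ for $p=\max\{n,m,k\}$ in the other.
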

\begin{proof} Let $A = \begin{pmatrix}a&0&0\\c&d&e\\0&0&f\end{pmatrix}\in$ nil$(H_{(s, t)}(R))$
be nilpotent. There exists a positive
integer $n$ such that $A^n = 0$. Then $a^n = d^n = f^n = 0$.
Conversely assume that $a^n = 0$, $d^m = 0$ and $f^k= 0$ for some
positive integers $n,m,k$. Let $p = max\{n, m, k\}$. Then $A^{2p}
= 0$.
\end{proof}
\begin{thm} The following hold for a ring $R$.
\begin{enumerate}
\item[(1)] If $R$ is a reduced ring, then $H_{(0, 0)}(R)$ is
N-reflexive but not reduced.
\item[(2)] If $R$ is reduced, then  $H_{(1, 0)}(R)$ is  N-reflexive but not reduced.
\item[(3)] If $R$ is reduced, then $H_{(0, 1)}(R)$ is N-reflexive but not reduced.
\item[(4)] $R$ is reduced if and only if $H_{(1, 1)}(R)$ is reduced.\end{enumerate}
\end{thm}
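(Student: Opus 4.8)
The plan is to separate the two assertions in (1)--(3) --- ``N-reflexive'' and ``not reduced'' --- and to dispatch (4) directly through Lemma \ref{nil}. For the failure of reducedness in (1)--(3) I would simply exhibit a nonzero square-zero element in each ring: for $H_{(0,0)}(R)$ the matrix with $a=d=f=0$, $c=1$, $e=0$; for $H_{(1,0)}(R)$ the matrix with $a=d=f=c=0$, $e=1$; and for $H_{(0,1)}(R)$ the matrix with $a=d=f=e=0$, $c=1$. Each satisfies the defining relations of the respective ring, squares to zero by a one-line check, and is nonzero because $1\neq 0$. Hence none of these three rings is reduced.

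For N-reflexivity in (1)--(3) I would use a single unified mechanism. Since $R$ is reduced, $\mathrm{nil}(R)=0$, so by Lemma \ref{nil} every nilpotent $A\in H_{(s,t)}(R)$ has all three diagonal entries zero; together with the relations $a-d=sc$ and $d-f=te$ this pins $A$ down to a matrix with a single free off-diagonal parameter in each case (the $(2,1)$- and $(2,3)$-entries for $(0,0)$, the $(2,3)$-entry for $(1,0)$, the $(2,1)$-entry for $(0,1)$). Writing $A$ in this reduced form and $B$ as a general element, I would compute the sandwich products $A\,H\,B$ and $B\,H\,A$ for arbitrary $H\in H_{(s,t)}(R)$. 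In every case both products collapse to one surviving entry in which the off-diagonal parameters of $B$ and $H$ drop out and only the common diagonal entry $b$ of $B$ survives: for instance in the $(0,0)$ case $AHB$ has $(2,1)$- and $(2,3)$-entries $chb$ and $ehb$, while $BHA$ has entries $bhc$ and $bhe$, where $c,e$ are the parameters of $A$.

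The crux is the observation that, because $R$ is reduced (hence reversible and semicommutative, by the elementary fact that $xy=0\Rightarrow(yx)^2=y(xy)x=0\Rightarrow yx=0$, and similarly $xy=0\Rightarrow(xry)^2=0\Rightarrow xry=0$), one has $xRy=0\iff xy=0\iff yx=0\iff yRx=0$ for all $x,y\in R$. Consequently the annihilation conditions extracted from $A\,H_{(s,t)}(R)\,B=0$ and from $B\,H_{(s,t)}(R)\,A=0$ are literally the same condition in $R$ (e.g.\ in the $(0,0)$ case both read $cRb=0$ and $eRb=0$, up to the symmetric swap $cRb=0\iff bRc=0$). Thus $A\,H_{(s,t)}(R)\,B=0\iff B\,H_{(s,t)}(R)\,A=0$, which gives left and right N-reflexivity at once. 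I expect the only real labor to be the bookkeeping of the $3\times 3$ products; conceptually nothing beyond the reduced-ring equivalence above is needed, and it is precisely the symmetry of the two surviving entries that lets the argument settle both sides simultaneously.

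Finally, for (4): if $R$ is reduced then, exactly as above, Lemma \ref{nil} forces any nilpotent element of $H_{(1,1)}(R)$ to have zero diagonal, and the relations $c=a-d$ and $e=d-f$ with $s=t=1$ then force $c=e=0$, so the element is $0$; hence $H_{(1,1)}(R)$ is reduced. Conversely, $R$ embeds in $H_{(1,1)}(R)$ as the diagonal subring $\{\,\mathrm{diag}(a,a,a)\mid a\in R\,\}$, and a subring of a reduced ring is reduced, so $H_{(1,1)}(R)$ reduced forces $R$ reduced. The only point to keep in view is that the choice $s=t=1$ is exactly what couples the diagonal to the off-diagonal entries, which is why $H_{(1,1)}(R)$ --- unlike the other three rings --- inherits reducedness rather than merely N-reflexivity.
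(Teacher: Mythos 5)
Your proof is correct and follows essentially the same route as the paper: Lemma \ref{nil} plus reducedness pins every nilpotent down to an off-diagonal form, the sandwich products collapse to entries of the shape $cRb$ versus $bRc$, and the reversibility/semicommutativity of reduced rings converts one annihilation condition into the other (the paper uses this same fact implicitly in passing from $cxk=0$ to $kxc=0$). If anything, your write-up is slightly tidier, since it obtains left and right N-reflexivity in one stroke and supplies explicit square-zero witnesses for non-reducedness in cases (2) and (3), which the paper only exhibits for case (1).
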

\begin{proof} (1) Let $A = \begin{pmatrix}a&0&0\\c&a&e\\0&0&a\end{pmatrix}\in$
nil$(H_{(0, 0)}(R))$ be nilpotent. By Lemma \ref{nil}, $a$ is
nilpotent. By assumption, $a = 0$. Let $B =
\begin{pmatrix}k&0&0\\l&k&n\\0&0&k\end{pmatrix}\in H_{(0,
0)}(R)$  with $AH_{(0, 0)}(R)B = 0$. $AB = 0$ implies $ck = 0$ and
and $ek = 0$. For any $X =
\begin{pmatrix}x&0&0\\y&x&u\\0&0&x\end{pmatrix}\in H_{(0,
0)}(R)$, $AXB =
\begin{pmatrix}0&0&0\\cxk&0&exk\\0&0&0\end{pmatrix} = 0$. Then $cxk = 0$ and $exk = 0$ for all $x\in R$.
The ring $R$ being reduced implies $kxc = 0$ and $kxe = 0$ for all
$x\in R$. Then $BXA =
\begin{pmatrix}0&0&0\\kxc&0&kxe\\0&0&0\end{pmatrix} = 0$ for all
$X\in H_{(0, 0)}(R)$. Hence $H_{(0, 0)}(R)$ is left N-reflexive. A
similar discussion reveals that $H_{(0, 0)}(R)$ is also right
N-reflexive. Note that being $R$ reduced does not imply $H_{(0,
0)}(R)$ is reduced because $A =
\begin{pmatrix}0&0&0\\1&0&1\\0&0&0\end{pmatrix}\in
H_{(0, 0)}(R)$ is a nonzero nilpotent element.\\
(2) Let $A =\begin{pmatrix} 0&0&0\\0&0&e\\0&0&0\end{pmatrix}\in $
nil$(H_{(1, 0)}(R))$ and  $B =
\begin{pmatrix} f+c&0&0\\c&f&d\\0&0&f\end{pmatrix}\in H_{(1,
0)}(R)$ with $AH_{(1, 0)}(R)B=0$. For any $C=\begin{pmatrix}
m+n&0&0\\n&m&u\\0&0&m\end{pmatrix}\in H_{(1, 0)}(R)$, $ACB=0$.
Then $emf=0$ and $fme=0$. This implies $BCA=0$. Therefore $H_{(1,
0)}(R)$ is left N-reflexive. Similarly, $H_{(1, 0)}(R)$ is also
right N-reflexive.
\\(3) Let $A =
\begin{pmatrix}0&0&0\\c&0&0\\0&0&0\end{pmatrix}\in
$ nil$(H_{(0, 1)}(R))$ and  $B =
\begin{pmatrix} e+f&0&0\\a&e+f&e\\0&0&f\end{pmatrix}\in H_{(0,
1)}(R)$ with $AH_{(0, 1)}(R)B=0$. For any $C=\begin{pmatrix}
m+n&0&0\\k&m+n&m\\0&0&n\end{pmatrix}\in H_{(0, 1)}(R)$, $ACB=0$.
Then $c(m+n)(e+f)=0$ and $(e+f)(m+n)c=0$. This implies $BCA=0$.
Therefore $H_{(0, 1)}(R)$ is left N-reflexive. Similarly, $H_{(0,
1)}(R)$ is also right N-reflexive.\\(4) Let $A =
\begin{pmatrix}c+e+f&0&0\\c&e+f&e\\0&0&f\end{pmatrix}\in
$ nil$(H_{(1, 1)}(R))$ be nilpotent. Then $f$ is nilpotent and so
$f = 0$. In turn, it implies $e = c = 0$. Hence $A = 0$.
Conversely, assume that $H_{(1, 1)}(R)$ is reduced. Let $a\in R$
with $a^n = 0$. Let $A =
\begin{pmatrix}a&0&0\\0&a&0\\0&0&a\end{pmatrix}\in H_{(1, 1)}(R)$.
Then $A$ is nilpotent. By assumption $a = 0$.
\end{proof}
\section{Generalizations, Examples and Applications}
 In this section, we introduce left N-right idempotent reflexive rings and right N-left idempotent
 reflexive rings generalize reflexive idempotent rings in Kwak and Lee \cite{KL}, and Kim
  \cite{Ki}, Kim and Baik in \cite{KB}. An ideal $I$ of a ring $R$
  is called {\it idempotent reflexive} if $aRe\subseteq I$ implies
  $eRa\subseteq I$ for any $a\in R$ and $e^2 = e\in R$. A ring $R$
  is said to be {\it idempotent reflexive} if the ideal 0 is idempotent reflexive.
   In \cite{KL1}, a ring $R$ is called to have the {\it reflexive-idempotents-property}
   if $R$ satisfies the property that $eRf = 0$ implies $fRe = 0$ for any  idempotents $e$ and $f$ of $R$. We introduce following some classes of rings to produce counter examples related to left N-reflexive rings. These classes of rings will be studied in detail in a subsequent paper by authors.
 \begin{df}{\rm Let $I$ be an ideal of a ring $R$. Then $I$ is called
  {\it left N-right idempotent reflexive} if being
 $aRe\subseteq I$ implies $eRa\subseteq I$ for any nilpotent $a\in R$ and $e^2 = e\in R$. A ring $R$ is called
 {\it left N-right idempotent reflexive} if  $0$ is a left N-right idempotent reflexive ideal.
 Left N-right idempotent reflexive ideals and rings are defined similarly. If a ring $R$ is left N-right
  idempotent reflexive  and right N-left idempotent reflexive, then it is called an {\it N-idempotent reflexive ring}.}
\end{df}

Every left N-reflexive ring is a left N-right idempotent reflexive
ring. But there are left N-right idempotent reflexive rings that
are not left N-reflexive.

\begin{exs}{\rm (1) Let $F$ be a field and $A = F\textless X, Y\textgreater$ denote the free algebra generated by noncommuting indeterminates $X$ and $Y$ over $F$.
Let $I$ denote the ideal generated by $YX$. Let $R = A/I$ and $x =
X + I$ and $y = Y + I\in R$. It is proved in \cite[Example 5]{Ki}
that $R$ is abelian and so $R$ has reflexive-idempotents-property
but not reflexive by showing that $xRy\neq 0$ and $yRx = 0$.
Moreover, $xyRx = 0$ and $xRxy \neq 0$. This also shows  that $R$
is not left N-reflexive since $xy$ is nilpotent in $R$.

(2) Let $F$ be a field and $A = F\textless X, Y\textgreater$
denote the free algebra generated by noncommuting indeterminates
$X$ and $Y$ over $F$. Let $I$ denote the ideal generated by $X^3$,
$Y^3$, $XY$, $YX^2$, $Y^2X$ in $A$. Let $R = A/I$ and  $x = X + I$
and $y = Y + I\in R$. Then in $R$, $x^3 = 0$, $y^3 = 0$, $xy = 0$,
$yx^2 = 0$, $y^2x = 0$. In \cite[Example 2.3]{JAKL}, $xRy = 0$,
$yRx\neq 0$ and idempotents in $R$ are 0 and 1. Hence for any
$r\in$ nil$(R)$ and $e^2 = e\in R$, $rRe = 0$ implies $eRr = 0$.
Thus $R$ is left N-right idempotent reflexive. We show that $R$ is
not a left N-reflexive ring. Since any $r \in R$ has the form $r =
k_0 + k_1x + k_2x^2 + k_3y + k_4y^2 + k_5yx$ and $x$ is nilpotent,
as noted above, $xRy = 0$. However, $yRx\neq 0$ since $yx \neq 0$.
Thus $R$ is not left N-reflexive.

(3) Let $F$ be a field of characteristic zero and $A = F\textless
X, Y, Z\textgreater$ denote the free algebra generated by
noncommuting indeterminates $X$, $Y$ and $Z$ over $F$. Let $I$
denote the ideal generated by $XAY$ and $X^2 - X$. Let $R = A/I$
and $x = X + I$, $y = Y + I$ and $z = Z + I\in R$. Then in $R$,
$xRy = 0$ and $x^2 = x$. $xy = 0$ and $yx$ is nilpotent and $x$ is
idempotent and $xRyx = 0$. But $yxRx \neq 0$. Hence $R$ is not
right N-left idempotent reflexive. In \cite[Example 3.3]{KL}, it
is shown that $R$ is right idempotent reflexive. }
\end{exs}

\end{document}